\newtheorem{theorem}{Theorem}[section]
\newtheorem{lemma}[theorem]{Lemma}
\newtheorem{proposition}[theorem]{Proposition}
\newtheorem{corollary}[theorem]{Corollary}
\newtheorem{definition}[theorem]{Definition}
\newtheorem{remark}[theorem]{Remark}
\newcommand\nph{\varphi}
\newcommand\Bim{\mathop{\rm Bim}}
\newcommand\Sat{\mathop{\rm Sat}}
\newcommand\vn{\mathop{\rm VN}}
\newcommand\cb{\mathop{\rm cb}}
\newcommand{\cl}[1]{\mathcal{#1}}
\newcommand{\bb}[1]{\mathbb{#1}}
\newcommand{\sca}[1]{\left(#1\right)}
\newcommand{\nor}[1]{\left\Vert #1\right\Vert}
\begin{document}

\title{Transference and preservation of uniqueness}

\author{I. G. Todorov}
\address{Pure Mathematics Research Centre, Queen's University Belfast, Belfast BT7 1NN, United Kingdom}
\email {i.todorov@qub.ac.uk}

\author{L. Turowska}
\address{Department of Mathematical Sciences,
Chalmers University of Technology and  the University of Gothenburg,
Gothenburg SE-412 96, Sweden}
\email{turowska@chalmers.se}

\date{21 August 2016}

\begin{abstract}
Motivated by the notion of a set of uniqueness in a locally compact group $G$, we introduce and
study ideals of uniqueness in the Fourier algebra $A(G)$ of $G$,
and their accompanying operator version, masa-bimodules of uniqueness.
We establish a transference between the two notions, and use this result
to show that the property of being an ideal of uniqueness is
preserved under natural operations.
\end{abstract}

\maketitle

\section{Introduction}\label{s_intro}

The notion of a set of uniqueness in the group of the circle arises
in connection with the problem of uniqueness of
trigonometric expansions and goes back to
Cantor. It has been studied extensively in the context of abelian locally compact groups (see
\cite{gmcgehee}).  It was extended to arbitrary locally compact (not 
necessarily commutative) groups by M. Bo\.{z}ejko \cite{bozejko_pams}
and was shown in \cite{stt} to play a decisive role in questions about closability of multipliers on 
group  C*-algebras.

Motivated by W. B. Arveson's pivotal paper \cite{a},
a programme of establishing precise links between
harmonic analytic and operator algebraic notions has been pursued
since the 1970's,
allowing the transference of fundamental concepts from Harmonic Analysis
to the setting of operator algebras
(see \cite{t_serdica} for a survey).
In addition, operator theoretic methods have been successfully employed to
obtain results belonging to the area of Harmonic Analysis {\it per se} (see {\it e.g.} \cite{et2}).
These ideas, along with questions about closability
of operator transformers,
led to the study of
sets of operator uniqueness in \cite{stt}, where
it was shown that a closed subset $E$ of a second countable
locally compact group $G$ is a set of uniqueness if and only if the subset
$E^* = \{(s,t) : ts^{-1}\in E\}$ of $G\times G$ is a set of
operator uniqueness. 

Along with sets of uniqueness, the accompanying (and weaker) notion of
a $U_1$-set has been investigated in the literature (see {\it e.g.} \cite{gmcgehee} and \cite{ulger}).  
A corresponding operator theoretic version, that of an operator $U_1$-set, 
was defined in \cite{stt_IEOT} and subsequently studied in \cite{stt}, 
where it was shown that a closed set $E\subseteq G$ 
is a $U_1$-set if and only if 
$E^*$ is an operator $U_1$-set.

It is known that, given a closed subset $E$ of a locally compact group $G$,
there exist two extremal closed ideals of the Fourier algebra $A(G)$ of $G$ whose null set coincides with $E$.
The notions of a set of uniqueness and of a $U_1$-set are defined through these ideals. 
In this paper, we unify the two concepts by defining and studying the notion of an 
ideal of uniqueness. 
We place in a general setting
a number of concepts from Abstract Harmonic Analysis and Operator Algebra Theory, 
obtaining as special cases some of the main results of \cite{stt} and \cite{stt_IEOT}.
The approach we take allows us to consider sets of uniqueness
and $U_1$-sets as special cases of the same concept,
and consequently to treat their properties in a consolidated manner.

In more detail, the content of the paper is as follows.
In Section \ref{s_prel}, we set notation and provide some necessary background,
including Eymard's approach that allows one to view $A(G)$ as the predual
of the von Neumann algebra $\vn(G)$ of $G$,
much of which lies at the heart of our development.
In Section \ref{s_cr}, we define the notions of ideals of multiplicity and uniqueness,
and their operator versions, namely, the notions of
masa-bimodules of multiplicity and uniqueness.
We establish a norm closure version of the main result from \cite{akt},
showing that the compact operators in the weak* closed $L^{\infty}(G)$-bimodule
generated by the annihilator $J^{\perp}$ of
a closed ideal $J$ of $A(G)$ can be
approximated in norm by saturations of the elements of the intersection
$J^{\perp}\cap C_r^*(G)$, where $C_r^*(G)$ is the reduced C*-algebra of $G$.
As a consequence of this result, we show that $J$ is an ideal of multiplicity
if and only if the weak* closed $L^{\infty}(G)$-bimodule generated by $J^{\perp}$ is a masa-bimodule
of multiplicity. A special case of this result is the theorem on transference of uniqueness for closed subsets of $G$
established in \cite[Theorem 4.9]{stt}.

In \cite[Corollary 4.14]{stt}, it was shown that
the property of being a set of uniqueness is preserved under taking (finite) unions.
In Section \ref{s_pres}, we generalise this result by
showing that the intersection of two ideals of uniqueness is an ideal of uniqueness.
The result is obtained as a consequence of
an intersection identity about masa-bimodules (Theorem \ref{th_sumsink})
that may be interesting in its own right; indeed, subtle intersection formulas 
for masa-bimodules were at the heart of operator algebraic applications to spectral synthesis
found in \cite{et2}.
We similarly extend the fact that the product of two sets of multiplicity is a set of
multiplicity \cite[Corollary 5.12]{stt} to a corresponding preservation property for ideals (Theorem \ref{th_tp}),
and establish preservation under inverse images (Theorem \ref{th_invima})
which should be compared to \cite[Corollary 5.7]{stt}.

\section{Preliminaries}\label{s_prel}

In this section, we introduce notation and include some background results that will be used in the
sequel.
For a Banach space $\cl X$, we denote by $\cl X^*$ its Banach space dual,
and for a subspace $\cl Y\subseteq \cl X$ (resp. $\cl Z\subseteq \cl X^*$),
we let
$$\cl Y^{\perp} = \{\omega\in \cl X^* : \omega(y) = 0, \mbox{ for all } y\in \cl Y\}$$
(resp.
$$\cl Z_{\perp} = \{x\in \cl X : \omega(x) = 0, \mbox{ for all } \omega\in \cl Z\})$$
be its annihilator (resp. preannihilator).
If $H$ is a Hilbert space, we denote by
$\cl B(H)$ the C*-algebra of all bounded linear operators on $H$.
Throughout the paper, $G$ will denote a second countable locally compact group.
We denote by $L^p(G)$, $p = 1,2,\infty$, the corresponding
Lebesgue spaces with respect to a fixed left Haar measure on $G$.
The left regular representation $\lambda : G\to \cl B(L^2(G))$ of $G$, given by
$\lambda_t f(s) = f(t^{-1}s)$, $f\in L^2(G)$, $s,t\in G$,
lifts to a *-representation of $L^1(G)$ on $L^2(G)$ {\it via} the formulas
$$\lambda(f) g(t) = (f\ast g)(t) = \int f(s) g(s^{-1}t)ds, \ \ f\in L^1(G),  g\in L^2(G), t\in G.$$
We denote by $C_r^*(G)$ the \emph{reduced C*-algebra} of $G$, that is,
the closure of $\lambda(L^1(G))$ in the operator norm, and let
$\vn(G)$ be the \emph{von Neumann algebra} of $G$, that is, the closure of $C_r^*(G)$ in the
weak* topology.
The \emph{Fourier algebra}
$A(G)$ of $G$ \cite{eymard} consists of all functions of the form $s\rightarrow (\lambda_s\xi,\eta)$,
where $\xi,\eta\in L^2(G)$.
It is known \cite{eymard} that $A(G)$ is a semisimple, regular, commutative
Banach algebra with spectrum $G$
that can be identified with the predual of $\vn(G)$ via the pairing
$\langle u,T\rangle = (T\xi,\eta)$, where $\xi,\eta\in L^2(G)$ are such that
$u(s) = (\lambda_s\xi,\eta)$, $s\in G$.
We denote by $\|u\|_A$ the norm of an element $u$ of $A(G)$.

Let
$$MA(G) = \{v : G\to\bb{C} \ : \ vu\in A(G), \mbox{ for all } u\in A(G)\}$$
be the multiplier algebra of $A(G)$.
For each $v\in MA(G)$, the map $u\to vu$ on $A(G)$ is bounded;
as usual, let $M^{\cb}A(G)$ be the subalgebra of $MA(G)$ consisting
of the elements $v$ for which the map $u \to vu$ on $A(G)$ is completely bounded
\cite{de-canniere-haagerup} (here, $A(G)$ is given the
operator space structure arising from the identification $A(G)^* \equiv \vn(G)$).
We equip $M^{\cb}A(G)$ with the corresponding completely bounded norm.

Let $(X,\mu)$ be a standard $\sigma$-finite measure space.
For a subset $\alpha\subseteq X$, we denote by $\chi_{\alpha}$ the
characteristic function of $\alpha$.
For a function $a\in L^{\infty}(X)$, we let $M_a$ be the (bounded)
operator on $L^2(X)$ given by $M_a(\xi) = a\xi$. We set
$$\cl D = \{M_a : a\in L^{\infty}(X)\};$$
thus, $\cl D$ is a maximal abelian selfadjoint algebra (masa, for short),
acting on $L^2(X)$.
Let $T(X) = L^2(X)\hat{\otimes} L^2(X)$, where $\hat{\otimes}$ denotes the
Banach space projective tensor product.
Every element $h\in T(X)$ is an absolutely convergent series
$h=\sum_{i=1}^{\infty} f_i\otimes g_i$ for some square summable sequences
$(f_i)_{i\in \bb{N}}, (g_i)_{i\in \bb{N}} \subseteq  L^2(X)$, and
may be considered either as a function $h : X\times X\to\bb C$ given by
\[
 h(s,t)=\sum_{i=1}^{\infty} f_i(s)g_i(t),
\]
or as an element of the predual of the space $\cl B(L^2(X))$
{\it via} the pairing
\[
 \langle T, h\rangle := \sum_{i=1}^{\infty} (Tf_i,\bar g_i).
\]
We denote by $\nor{h}_T$ the norm of $h\in T(X)$.
Note that $T(X)$ can itself be identified with
the dual of the space $\cl K$ of all compact operators on $L^2(X)$.

The space $\frak{S}(X)$ of all \emph{Schur multipliers}
on $X\times X$ consists, by definition, of all
measurable essentially bounded functions $\nph : X\times X\to \bb{C}$
such that $\nph h$ is equivalent (with respect to product measure)
to a function from $T(X)$, for every $h\in T(X)$.
If $\nph\in \frak{S}(X)$ then the map $h\to \nph h$ on $T(X)$ is bounded, and
its dual on $\cl B(L^2(G))$ will be denoted by $S_{\nph}$. We let
$\|\nph\|_{\frak{S}} = \|S_{\nph}\|$.
Note that $S_{\nph}$ leaves $\cl K$ invariant and hence, if $\nph\in \frak{S}(X)$
then the map $h\to \nph h$ on $T(X)$ is weak* continuous.
It is easy to see that, if $\phi, \psi\in L^{\infty}(X)$
then the function $\phi\otimes \psi$ (given by $(\phi\otimes \psi)(s,t) = \phi(s)\psi(t)$)
belongs to $\frak{S}(X)$; thus,
$T(X)$ has a natural  $L^{\infty}(X)$-bimodule structure given by letting
$\phi\cdot h \cdot \psi = (\phi\otimes \psi)h$, $h\in T(X)$, $\phi, \psi\in L^{\infty}(X)$.
In a similar fashion, $\cl B(L^2(X))$ has the structure of a $\cl D$-bimodule, arising from operator multiplication.
A subspace $\cl U\subseteq \cl B(L^2(X))$ will be called
a \emph{$\cl D$-bimodule}, or a \emph{masa-bimodule}, if
$ATB\in \cl U$ whenever $T\in \cl U$ and $A,B\in \cl D$.

We will mostly be interested in the case where $X = G$ and $\mu$
is the left Haar measure. In this case,
the predual $P$ of the inclusion of $\vn(G)$ into $\cl B(L^2(G))$ is a contraction
from $T(G)$ onto $A(G)$ given by 
\begin{equation}\label{eq_p}
P(h)(t) = \int_G h(t^{-1} s,s)ds,  \ \ \ t\in G
\end{equation}
(see \cite{stt}).
Hence, if $T\in VN(G)$  and $h\in T(G)$, then
\begin{equation}\label{pair}
\langle T,P(h)\rangle=\langle T,h\rangle,
\end{equation}
where the first pairing is between $\vn(G)$ and $A(G)$, while the second one is that between $\cl B(L^2(G))$ and $T(G)$.

If $u\in L^{\infty}(G)$, let $N(u) : G\times G\to \bb{C}$ be given by
$N(u)(s,t) = u(ts^{-1})$.
The following fact \cite{bf} (see also \cite{j}, \cite{spronk}) is at the base of
subsequent transference results.

\begin{theorem}\label{spro}
The map $u\rightarrow N(u)$ is an isometry from $M^{\cb}A(G)$ into $ \frak{S}(G)$.
\end{theorem}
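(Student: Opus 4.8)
The plan is to move between the two multiplier structures through the predual map $P$ of \eqref{eq_p}. The cornerstone is the intertwining identity
\[
P\bigl(N(v)\,h\bigr) = v\cdot P(h), \qquad h\in T(G),
\]
valid for each $v\in L^\infty(G)$ with $N(v)h\in T(G)$. It follows directly from \eqref{eq_p}: since $N(v)(s,t)=v(ts\inv)$, evaluating $N(v)h$ at the argument $(t\inv s,s)$ produces the factor $v\bigl(s(t\inv s)\inv\bigr)=v(t)$, which is independent of $s$ and may be pulled out of the integral, leaving $v(t)P(h)(t)$. Thus $P$ carries multiplication by the Schur symbol $N(v)$ on $T(G)$ to pointwise multiplication by $v$ on $A(G)$.

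First I would show that $N$ takes $M^{\cb}A(G)$ into $\frak{S}(G)$ with $\|N(v)\|_{\frak{S}}\le\|v\|_{\cb}$. For this I would invoke the Gilbert-type representation of completely bounded multipliers: $v\in M^{\cb}A(G)$ with $\|v\|_{\cb}\le C$ if and only if there exist a Hilbert space $H$ and bounded continuous maps $\xi,\eta:G\to H$ such that $v(ts\inv)=(\xi(t),\eta(s))$ for all $s,t\in G$ and $(\sup_t\|\xi(t)\|)(\sup_s\|\eta(s)\|)\le C$. Substituting into the definition of $N$ gives $N(v)(s,t)=(\xi(t),\eta(s))$; expanding the inner product in an orthonormal basis of $H$ exhibits this as a Grothendieck-type factorisation, and the standard estimate for such factorisations shows that $N(v)\in\frak{S}(G)$ with $\|N(v)\|_{\frak{S}}\le(\sup_t\|\xi(t)\|)(\sup_s\|\eta(s)\|)$. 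Optimising over the representation yields $\|N(v)\|_{\frak{S}}\le\|v\|_{\cb}$.

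For the reverse inequality I would argue by transference, using that $P$ is the predual of the completely isometric inclusion $\vn(G)\hookrightarrow\cl B(L^2(G))$ and is therefore a complete metric surjection. For $v\in M^{\cb}A(G)$, the identity of the first paragraph reads $m_v\circ P=P\circ R_{N(v)}$, where $m_v(u)=vu$ on $A(G)$ and $R_{N(v)}(h)=N(v)h$ on $T(G)$; the latter is completely bounded with cb-norm $\|N(v)\|_{\frak{S}}$, being the operator space predual of $S_{N(v)}$ and Schur multipliers being automatically completely bounded with cb-norm equal to their norm. Since $P$ is a complete metric surjection, every $u\in A(G)$ lifts to some $h\in T(G)$ with $\|h\|_T$ arbitrarily close to $\|u\|_A$, whence
\[
\|m_v(u)\|_A=\|P(N(v)h)\|_A\le\|N(v)h\|_T\le\|N(v)\|_{\frak{S}}\,\|h\|_T;
\]
carrying this estimate through at every matrix level gives $\|v\|_{\cb}=\|m_v\|_{\cb}\le\|N(v)\|_{\frak{S}}$. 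Combined with the previous step, $N$ is an isometry.

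I expect the main obstacle to lie in the forward direction, specifically in passing from the completely bounded multiplier $v$ to a factorisation of $N(v)$ that genuinely exhibits it as a bounded Schur multiplier. The Gilbert representation is the substantial input here, and its virtue is that it furnishes \emph{continuous} vector fields $\xi,\eta$, so that $(\xi(t),\eta(s))$ is defined everywhere and agrees with $N(v)$ off a null set; reconciling this pointwise datum with the measurability and essential-boundedness conventions built into the definition of $\frak{S}(G)$, and verifying that a bounded factorisation indeed delivers a bounded Schur multiplier rather than merely a bounded function, is the delicate point. The reverse direction, by contrast, is soft once the operator space structure of $P$ and of $R_{N(v)}$ is in place.
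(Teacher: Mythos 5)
You should first be aware that the paper does not prove this statement at all: Theorem \ref{spro} is quoted as a known result of Bo\.{z}ejko--Fendler \cite{bf} (see also \cite{j}, \cite{spronk}), so there is no in-paper proof to compare against. Judged on its own terms, your reconstruction is essentially correct and is in fact the standard argument from those references: the contractivity of $N$ comes from the Gilbert-type representation of completely bounded multipliers combined with the Grothendieck--Peller factorisation criterion for Schur multipliers, and the lower bound comes from the intertwining identity $P(N(v)h)=v\,P(h)$ (your computation $v\bigl(s(t\inv s)\inv\bigr)=v(t)$ is right) together with the facts that $P$ is a complete quotient map and that bounded Schur multipliers are automatically completely bounded with equal cb-norm (Smith's theorem). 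Two caveats are worth making explicit. First, the Gilbert representation with norm control is not an off-the-shelf lemma here: it is essentially the content of the Bo\.{z}ejko--Fendler theorem itself, so your "forward direction" outsources the entire analytic weight of the result to that citation; a self-contained proof would have to derive it, e.g.\ via the Haagerup--Paulsen--Wittstock representation of the completely bounded map $u\mapsto vu$ on $A(G)=\vn(G)_*$. Second, in the intertwining identity the integral in (\ref{eq_p}) is taken over the set $\{(t\inv s,s):s\in G\}$, which is null for the product measure, so "equivalent with respect to product measure" (the convention in the definition of $\frak{S}(G)$) is a priori too weak to make the computation meaningful; one must use that elements of $T(G)$ have canonical representatives well defined up to \emph{marginal} equivalence, and that marginally null sets meet each such graph in a Haar-null set. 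You flag this point correctly, but it does need the marginal-equivalence machinery of \cite{eks} rather than only measurability conventions. With those two inputs granted, the proof is sound.
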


\section{Transference}\label{s_cr}

In this section, we introduce ideals and bimodules of multiplicity
and establish a transference result that permits passing from the former to the latter notion.

\begin{definition}\label{d_mi}
A closed ideal $J$ of $A(G)$ will be called an \emph{ideal of multiplicity} if
$J^{\perp}\cap C^*_r(G) \neq \{0\}$. The ideal $J$ will be called an
\emph{ideal of uniqueness} if it is not an ideal of multiplicity.
\end{definition}

\noindent
{\bf Remark. } Let $J_1$ and $J_2$ be closed ideals of $A(G)$ such that $J_1\subseteq J_2$.
It is clear that if $J_1$ is an ideal of uniqueness then so is $J_2$.

\medskip

Let $E\subseteq G$ be a closed set.
It is well-known that the ideals
$$I(E)=\{u\in A(G) : u(s)=0, s\in E\}$$
and
$$J(E)=\overline{\{u\in A(G) : u \text{ has compact support disjoint from } E\}}^{\|\cdot\|_{A}}$$
are extremal in that
if $J\subseteq A(G)$ is a closed ideal whose null set is equal to $E$
(in the sense that $E = \{s\in G : u(s) = 0, \mbox{ for all } u\in J\}$),
then $J(E)\subseteq J\subseteq I(E)$.

The property of $E$ being
a \emph{set of uniqueness} was introduced in \cite{bozejko_pams}.
In \cite{stt}, following the earlier literature on classical groups,
sets that are not of uniqueness we called \emph{$M$-sets},
and the accompanying notion of \emph{$M_1$-sets} was introduced.
It is clear from Definition \ref{d_mi}, and the definitions made in \cite{bozejko_pams} and \cite{stt},
that $E$ is an $M$-set (resp. an $M_1$-set) if and only if $J(E)$ (resp. $I(E)$)
is an ideal of multiplicity.

\medskip


We next introduce the operator version of the notion of an ideal of multiplicity.

\begin{definition}\label{d_mb}
Let $(X,\mu)$ be a standard $\sigma$-finite measure space.
A weak* closed masa-bimodule $\cl U\subseteq \cl B(L^2(X))$ will be called a
\emph{bimodule of multiplicity} if $\cl U\cap \cl K\neq \{0\}$.
The space $\cl U$ will be called a
\emph{bimodule of uniqueness} if it is not a bimodule of multiplicity.
\end{definition}

\noindent
{\bf Remark } Let $\cl U_1$ and $\cl U_2$ be weak* closed masa-bimodules such that $\cl U_1\subseteq \cl U_2$.
It is clear that if $\cl U_2$ is a bimodule of uniqueness then so is $\cl U_1$.

\medskip

Recall from \cite{a} and \cite{eks} that
a subset $M\subseteq X\times X$ is called \emph{marginally null}
if there exists a null set $N\subseteq X$ such that $M\subseteq (N\times X)\cup (X\times N)$.
Two sets $\kappa,\kappa'\subseteq X\times X$ are called \emph{marginally equivalent} if
their symmetric difference $\kappa\Delta\kappa'$ is marginally null.
A subset $\kappa\subseteq X\times X$ is called \emph{$\omega$-open}
if it is marginally equivalent to a subset of $X\times X$ of the form
$\cup_{i=1}^{\infty}\alpha_i\times\beta_i$, where $\alpha_i,\beta_i\subseteq X$
are measurable. The set $\kappa$ is called \emph{$\omega$-closed} if its complement
is $\omega$-open.

Let $\kappa\subseteq X\times X$ be an $\omega$-closed set.
Analogously to the case of subsets of a locally compact group $G$, two
weak* closed masa-bimodules were introduced in \cite{a} and studied in the literature (see \cite{eks} and \cite{st1}):
$$\frak{M}_{\max}(\kappa) =
\{h\in T(X) : h \mbox{ vanishes on an }\omega\mbox{-open nbhd of } \kappa\}^{\perp}$$
and
$$\frak{M}_{\min}(\kappa) = \{h\in T(X) : h \mbox{ vanishes on } \kappa\}^{\perp}.$$
To every masa-bimodule $\cl U\subseteq \cl B(L^2(G))$, an
$\omega$-closed set $\kappa$ called its \emph{support} was associated in \cite{eks},
and it was shown that if $\cl U$ is a weak* closed masa-bimodule with support $\kappa$
then $\frak{M}_{\min}(\kappa)\subseteq \cl U\subseteq \frak{M}_{\max}(\kappa)$.

The property of $\kappa$ being an \emph{operator $M$-set}
(resp. an \emph{operator $M_1$-set}) was intoduced in \cite{stt_IEOT}.
We have that $\kappa$ is an operator $M$-set (resp. an operator $M_1$-set) precisely when
$\frak{M}_{\max}(\kappa)$ (resp. $\frak{M}_{\min}(\kappa)$) is a
bimodule of multiplicity.

\medskip

Since compact subsets of $G$ have finite Haar measure,
if $L\subseteq G$ is compact then the function $\chi_{L\times L}$ belongs to $T(G)$ and hence, by Theorem \ref{spro},
$N(u)\chi_{L\times L} \in T(G)$ for every $u\in M^{\cb}A(G)$.
Given a subspace $J\subseteq A(G)$, we let $\Sat(J)$ be the
closed $L^\infty(G)$-bimodule of $T(G)$ generated by  the set
\[
\{N(u)\chi_{L\times L} : u \in J,  \ L  \text{ compact, } \ L\subseteq G \}.
\]
It was shown in \cite{akt} that
$\Sat(J)=\overline{[N(J)T(G)]}^{\|\cdot\|_T}.$

On the other hand, given a subspace $\cl X$ of $\vn(G)$, we let $\Bim(\cl X)\subseteq \cl B(L^2(G))$
be the weak* closed masa-bimodule generated by $\cl X$.
The following theorem was proved in \cite{akt}.

\begin{theorem}\label{th_akt}
If $J\subseteq A(G)$ is a closed ideal then $\Sat(J)^{\perp} = \Bim(J^{\perp})$.
\end{theorem}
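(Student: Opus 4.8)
The plan is to prove the identity by comparing the two weak* closed subspaces of $\cl B(L^2(G))$ through their preannihilators in $T(G)$. First I would note that $\Sat(J)^{\perp}$ is itself a weak* closed masa-bimodule: it is weak* closed as an annihilator, and since $\Sat(J)$ is an $L^{\infty}(G)$-submodule of $T(G)$, the identity $\langle M_{\phi} T M_{\psi}, h\rangle = \langle T, (\psi\otimes\phi)h\rangle$ shows that $\Sat(J)^{\perp}$ is invariant under the $\cl D$-bimodule action. Because $\Bim(J^{\perp})$ is by definition the smallest weak* closed masa-bimodule containing $J^{\perp}$, the inclusion $\Bim(J^{\perp})\subseteq\Sat(J)^{\perp}$ will follow once I check that $J^{\perp}$ annihilates $\Sat(J)$. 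For the reverse inclusion, since both spaces are weak* closed it suffices to show that their preannihilators coincide; as $\Sat(J)$ is norm closed, $(\Sat(J)^{\perp})_{\perp} = \Sat(J)$, so the remaining task is $(\Bim(J^{\perp}))_{\perp}\subseteq\Sat(J)$.

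For the first (easy) inclusion, I would use the intertwining identity $P(N(u)h) = u\,P(h)$ for $u\in J$ and $h\in T(G)$, which follows directly from the defining formula (\ref{eq_p}) and $N(u)(s,t) = u(ts^{-1})$ by a change of variables in the integral. Given $T\in J^{\perp}\subseteq\vn(G)$ and a generator $N(u)h$ of $\Sat(J)$ with $u\in J$, identity (\ref{pair}) gives $\langle T, N(u)h\rangle = \langle T, P(N(u)h)\rangle = \langle T, u\,P(h)\rangle$; since $P(h)\in A(G)$ and $J$ is an ideal, $u\,P(h)\in J$, whence the pairing vanishes. As $\Sat(J) = \overline{[N(J)T(G)]}^{\|\cdot\|_T}$, this shows $J^{\perp}\subseteq\Sat(J)^{\perp}$ and hence $\Bim(J^{\perp})\subseteq\Sat(J)^{\perp}$.

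The substance of the proof is the reverse inclusion $(\Bim(J^{\perp}))_{\perp}\subseteq\Sat(J)$. I would first identify the preannihilator explicitly: using $\langle M_{\phi}TM_{\psi}, h\rangle = \langle T,(\psi\otimes\phi)h\rangle$, identity (\ref{pair}), and the bipolar identity $(J^{\perp})_{\perp} = J$, one obtains that $h\in(\Bim(J^{\perp}))_{\perp}$ if and only if $P((\phi\otimes\psi)h)\in J$ for all $\phi,\psi\in L^{\infty}(G)$. It then remains to show that every such $h$ lies in $\Sat(J)$. Here I would reduce, using that $\Sat(J)$ is a norm-closed bimodule together with the density of finite sums of elementary tensors $f\otimes g$ with $f,g\in C_c(G)$, to approximating $h$ in $T(G)$-norm by finite sums $\sum_i N(u_i)h_i$ with $u_i\in J$. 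Passing to the ``groupoid'' coordinates $(r,s)\mapsto h(r^{-1}s,s)$, the averages $P((\phi\otimes\psi)h)$ become integrals of these slices against $\phi(r^{-1}s)\psi(s)$, so the hypothesis says precisely that the relevant averaged functions in the $r = ts^{-1}$ variable belong to $J$; these are exactly the multipliers needed to build the approximants.

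The main obstacle is this last approximation step: one must realise $h$ as a norm limit of the sums $\sum_i N(u_i)h_i$ by a Riemann-sum argument, partitioning the $r$-variable into small measurable sets, replacing the Schur multipliers $N(u)$ by step functions adapted to the partition, and controlling the error in the $T(G)$-norm via the strong continuity of the translation action on $L^2(G)$. Verifying that the building blocks genuinely lie in $N(J)T(G)$---which is where the full strength of the hypothesis $P((\phi\otimes\psi)h)\in J$ for \emph{all} $\phi,\psi$ is consumed---and that the error estimates close up, is the technical heart of the argument, carried out in \cite{akt}.
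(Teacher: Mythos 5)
A preliminary remark: this paper does not prove Theorem \ref{th_akt} at all --- it is quoted from \cite{akt} --- so the only meaningful comparison is with the argument of \cite{akt}, fragments of which reappear in the proof of Theorem \ref{th_akt0}. Your reductions are correct and are exactly the right first moves: $\Sat(J)^{\perp}$ is a weak* closed masa-bimodule; the identity $P(N(u)h)=u\,P(h)$ together with (\ref{pair}) gives $J^{\perp}\subseteq \Sat(J)^{\perp}$ and hence $\Bim(J^{\perp})\subseteq \Sat(J)^{\perp}$; and by the bipolar theorem the converse inclusion is equivalent to the implication ``$P((\phi\otimes\psi)h)\in J$ for all $\phi,\psi\in L^{\infty}(G)$ implies $h\in\Sat(J)$''. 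Up to this point everything checks out.

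The gap is that this last implication, which is the entire content of the theorem, is neither proved nor reduced to a plausible argument. The Riemann-sum strategy you sketch would localise $h$ in the variable $r=ts^{-1}$ by multiplying with functions $N(\chi_{A})$ for small measurable $A\subseteq G$; but $\chi_{A}$ is not in $A(G)$, let alone in $J$, and smoothing does not repair this, because the hypothesis hands you elements of $J$ only in the form of the \emph{global} averages $P((\phi\otimes\psi)h)$ --- there is no mechanism in your outline that converts these into functions of $ts^{-1}$ concentrated near a prescribed value of $r$ and still lying in $J$ (for $J=J(E)$ this localisation is essentially the whole theorem restated). The error control you invoke, strong continuity of translation on $L^{2}(G)$, bounds perturbations of elementary tensors but does not give uniform control of the $T(G)$-norm over a general absolutely convergent series $\sum_i f_i\otimes g_i$. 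What \cite{akt} actually does in place of the partition is representation-theoretic: for compactly supported $h$ one forms the transforms $\tilde h^{\pi}_{i,j}(s,t)=\int h(sr,tr)u^{\pi}_{i,j}(sr)\,dr$ over $\pi\in\widehat{G}$; the computation reproduced in the proof of Theorem \ref{th_akt0} identifies $\chi_{K\times K}\tilde h^{\pi}_{i,j}$ with $\chi_{K\times K}NP((u^{\pi}_{i,j}\otimes 1)h)$, which your hypothesis places in $\Sat(J)$; the norm-convergent expansion (\ref{eq_series}) then puts $\chi_{K\times K}h^{\pi}_{i,j}$ into $\Sat(J)$, and the completeness statement \cite[Lemma 3.14]{akt} --- the Plancherel-type substitute for your Riemann sums --- shows that these coefficients suffice to pin $h$ inside $\Sat(J)$. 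Without this (or an equivalent device) your proposal establishes only the easy inclusion $\Bim(J^{\perp})\subseteq\Sat(J)^{\perp}$.
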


Given an ideal $J\subseteq A(G)$, we are interested in when
the subspace $J^{\perp}\cap C_r^*(G)$ is trivial.
For a subspace $\cl Y\subseteq C_r^*(G)$,
we therefore define
$$\Bim\mbox{}_0(\cl Y) = \overline{{\rm span}}\{M_b Y M_a :
Y\in \cl Y, \ a,b\in L^{\infty}(G) \mbox{ with compact support}\},$$
where the closure is taken in the norm topology. Note that, by the Stone-von Neumann Theorem,
$\Bim_0(C_r^*(G)) = \cl K$. In particular,  $\Bim_0(\cl Y)$ is a masa-bimodule consisting of compact operators,
for every subspace $\cl Y\subseteq C_r^*(G)$.

The following theorem can be viewed as a norm closure version of Theorem \ref{th_akt}.

\begin{theorem}\label{th_akt0}
If $J\subseteq A(G)$ is a closed ideal then
\begin{equation}\label{eq_bim0}
\Bim(J^{\perp})\cap \cl K = \Bim\mbox{}_0(J^{\perp}\cap C_r^*(G)).
\end{equation}
\end{theorem}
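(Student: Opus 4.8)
The plan is to prove the two inclusions of \eqref{eq_bim0} separately, the inclusion $\supseteq$ being routine and the inclusion $\subseteq$ carrying the analytic content. For $\supseteq$, observe that if $Y\in J^{\perp}\cap C_r^*(G)$ and $a,b\in L^{\infty}(G)$ have compact support, then $M_bYM_a\in\Bim(J^{\perp})$ because $Y\in J^{\perp}$ and $\Bim(J^{\perp})$ is a masa-bimodule; moreover $M_bYM_a$ is compact by the Stone--von Neumann theorem (this is exactly the observation $\Bim_0(C_r^*(G))=\cl K$). Since $\Bim(J^{\perp})$ is weak* closed, hence norm closed, the intersection $\Bim(J^{\perp})\cap\cl K$ is norm closed and therefore contains the norm-closed span $\Bim_0(J^{\perp}\cap C_r^*(G))$.

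For $\subseteq$, I would first reduce to compactly cut down operators. If $T\in\Bim(J^{\perp})\cap\cl K$ and $(L_n)$ is an increasing sequence of compact sets exhausting $G$, then $M_{\chi_{L_n}}TM_{\chi_{L_n}}\to T$ in norm by compactness of $T$, while each $M_{\chi_{L_n}}TM_{\chi_{L_n}}$ still lies in $\Bim(J^{\perp})\cap\cl K$ (a masa-bimodule). It thus suffices to treat $T=M_{\chi_L}TM_{\chi_L}$ and to show that such a $T$ is a norm limit of saturations $M_bYM_a$ with $Y\in J^{\perp}\cap C_r^*(G)$.

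The heart of the argument is a passage, through the predual map $P$, between compact operators and $C_r^*(G)$. Two facts drive it. First, Theorem \ref{th_akt} gives $\Bim(J^{\perp})=\Sat(J)^{\perp}$, so $T\perp N(u)h$ for all $u\in J$ and $h\in T(G)$; combined with the covariance identity $P(N(u)h)=u\,P(h)$, a direct computation from \eqref{eq_p}, and the ideal property of $J$, this forces the data averaged out of $T$ to annihilate $J$. Second, because $T$ is compactly cut down, the function obtained by integrating its kernel along the antidiagonals lies in $A(G)\cap L^1(G)$ and hence, under $\lambda$, produces a genuine element $Y\in C_r^*(G)$; the annihilation just noted then places $Y$ in $J^{\perp}\cap C_r^*(G)$. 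Dually, for $h\in T(G)$ with $P((a\otimes b)h)\in{}_{\perp}(J^{\perp}\cap C_r^*(G))$ for every compactly supported $a,b$, one computes $\langle T,h\rangle$ through $P$ using \eqref{pair} and finds it to vanish, which by the duality between $\cl K$ and $T(G)$ places $T$ in $\Bim_0(J^{\perp}\cap C_r^*(G))$.

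The main obstacle is the last step: a single compact operator cannot be recovered from its antidiagonal average (a compact operator rarely lies in $\vn(G)$), so the reconstruction must be performed approximately and within the bimodule. I expect to handle this by a localisation on $L\times L$: subordinate to a fine measurable partition, the Schur multiplier $N(u)$ is nearly constant on each box, so that $\Sat(J)$ and the norm-module $\Bim_0(J^{\perp}\cap C_r^*(G))$ are built from the same data $N(u)\chi_{L\times L}$ up to a controllable error, with Theorem \ref{spro} supplying the bound $\nor{N(u)}_{\frak{S}}=\no{u}$ that keeps the Schur norms under control. Passing to the limit over finer partitions and over the exhausting sets $L_n$ should then yield the required norm approximation. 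It is this localisation, rather than the duality bookkeeping, that I expect to be where the norm-closure refinement genuinely costs something beyond the weak* statement of Theorem \ref{th_akt}.
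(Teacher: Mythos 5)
Your overall architecture matches the paper's: the inclusion $\supseteq$ is handled exactly as in the text, and for $\subseteq$ you correctly set up the duality with $T(G)=\cl K^*$, reduce to compactly cut-down data (the paper dually reduces to $h=h\chi_{K\times K}$), and identify the key device of averaging a compact operator along the antidiagonals to manufacture elements of $J^{\perp}\cap C_r^*(G)$. This is the paper's family of maps $E_{\nph}:\cl B(L^2(G))\to \vn(G)$, determined by $\langle E_{\nph}(T),u\rangle=\langle T,\nph N(u)\rangle$, which send $\cl K$ into $C_r^*(G)$ and land in $J^{\perp}$ because $T\in\Sat(J)_{\perp}$ by Theorem \ref{th_akt}; your $L^1$-kernel version is essentially $E_{\chi_{L\times L}}$ and would in any case need the $T(G)$-pairing formulation to cover general compact $T$ rather than trace-class ones. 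The hypothesis that $h$ annihilates $\Bim\mbox{}_0(J^{\perp}\cap C_r^*(G))$ then yields, as you indicate, $\langle T,\nph\, NP((a\otimes b)h)\rangle=0$ for all $\nph\in T(G)$ and all compactly supported $a,b$.

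The gap is in the final reconstruction step, and the fix you propose does not close it. What must be shown is that annihilation of all the functions $\nph\, NP((a\otimes b)h)$ forces $\langle T,h\rangle=0$; equivalently, that $h$ can be recovered, in a topology seen by $T$, from its antidiagonal averages against the functions $a\otimes b$. A partition of $L\times L$ into boxes on which $N(u)$ is ``nearly constant'' goes in the wrong direction: at best it approximates $N(u)\chi_{L\times L}$ by step functions, whereas what is needed is to express $h$ itself in terms of the family $NP((a\otimes b)h)$; moreover the relevant geometry is that of the antidiagonals $\{(s,t):ts^{-1}=x\}$, which are transverse to any product partition, so no refinement of boxes captures it. The paper resolves this with genuinely representation-theoretic input: taking $a=u^{\pi}_{i,j}\chi_K$ and $b=\chi_K$ for matrix coefficients of irreducible $\pi\in\widehat{G}$ identifies $\chi_{K\times K}NP((a\otimes b)h)$ with the functions $\chi_{K\times K}\tilde h^{\pi}_{i,j}$ of \cite{akt}; the norm-convergent expansion \eqref{eq_series} converts these into the Fourier-type coefficients $\chi_{K\times K}h^{\pi}_{i,j}$; and the Plancherel/Peter--Weyl-type completeness result \cite[Lemma 3.14]{akt} then gives $\langle T,h\rangle=0$. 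Some substitute for this harmonic-analytic step is unavoidable, and it is precisely what your proposal leaves unsupplied.
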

\begin{proof}
We first recall some technical tools from \cite{akt}, \cite{lt} and \cite{stt}.
Let $\widehat{G}$ be the set of all (equivalence classes of) irreducible representations of $G$.
For $\pi \in \widehat{G}$, acting on a Hilbert space $H_{\pi}$,
write $u^\pi_{i,j}$ for the function given by $u^\pi_{i,j}(r)=\sca{\pi(r)e_j,e_i}$,
where $\{e_n\}_{n\in \bb{N}_\pi}$ is a fixed orthonormal basis of $H_\pi$
(and the index set $\bb{N}_\pi$ has cardinality $\dim H_{\pi}$).
If $h\in T(G)$ is compactly supported and $\pi\in \widehat{G}$, define \cite{lt}
\begin{align}
h^\pi_{i,j}(s,t)& =\int_G h(sr,tr) u^\pi_{i,j}(r)dr; \nonumber \\
\tilde h^\pi_{i,j}(s,t)& =\int_G h(sr,tr) u^\pi_{i,j}(sr)dr .
\label{star}
\end{align}
If $\phi$ is a function on $G$, we denote by $\check\phi$
the function given by $\check\phi(s)=\phi(s^{-1})$, $s\in G$.

By \cite[Lemmas 3.4 and 3.8]{akt}, the functions of the form $\chi_{L\times L} h^{\pi}_{i,j}$ and $\chi_{L\times L} \tilde{h}^{\pi}_{i,j}$,
where $L\subseteq G$ is a compact subset and $h\in T(G)$ is compactly supported,
are Schur multipliers, and, by \cite[Lemma 3.12]{akt},
\begin{equation}\label{eq_series}
h^\pi_{i,j} \chi_{L\times L} =
\sum_k (\check u^\pi_{i,k}\otimes\mathbf{1})\tilde h^\pi_{k,j} \chi_{L\times L},
\end{equation}
where the convergence is in the norm of $T(G)$.

Let $\nph\in T(G)$. Then there exists \cite{stt} a (unique)
completely bounded map $E_{\nph} : \cl B(L^2(G))\to \vn(G)$ such that
$$\langle E_{\nph}(T),u\rangle = \langle T, \nph N(u)\rangle, \ \ \ \ u\in A(G).$$
Moreover, if $T\in \cl K$ then $E_{\nph}(T)\in C^*_r(G)$.

By the remarks preceding the formulation of Theorem \ref{th_akt0},
\begin{equation}\label{eq_inc0}
\Bim\mbox{}_0(J^{\perp}\cap C_r^*(G))\subseteq \Bim(J^{\perp})\cap \cl K.
\end{equation}
Let $T\in \Bim(J^{\perp})\cap \cl K$.
To show equality in (\ref{eq_inc0}), it suffices to prove that if
$h\in T(G)$ annihilates $\Bim_0(J^{\perp}\cap C_r^*(G))$ then $\langle T,h\rangle = 0$.
Suppose that $h\in \Bim_0(J^{\perp}\cap C_r^*(G))^{\perp}$.
By \cite[Lemma 3.13]{akt}, there are compact sets $K_n$, $n\in \bb{N}$, such that
$h\chi_{K_n\times K_n}\to h$ in the norm of $T(G)$. Moreover,
$h\chi_{K_n\times K_n} \in \Bim_0(J^{\perp}\cap C_r^*(G))^{\perp}$
since the latter space is a $\cl D$-bimodule.
If we show that $\langle T,h\chi_{K_n\times K_n}\rangle = 0$
then
$$\langle T,h\rangle = \lim_{n\to\infty} \langle T,h\chi_{K_n\times K_n}\rangle = 0;$$
we may thus assume that
$h = h\chi_{K\times K}$, for some compact set $K\subseteq G$.

Let $\nph\in T(G)$ and $u\in J$. Then $N(u)\in N(J)$ and hence
$\nph N(u)\in N(J) T(G)$. By Theorem \ref{th_akt}, $T\in \Sat(J)_{\perp}$ and hence
$$\langle E_{\nph}(T),u\rangle = \langle T, \nph N(u)\rangle = 0.$$
Thus, $E_{\nph}(T)\in J^{\perp}\cap C_r^*(G)$.
If $\nph\in T(G)$ and $a,b\in L^{\infty}(G)$ are compactly supported then, using (\ref{pair}), we have
\begin{eqnarray*}
0 & = & \langle M_a E_{\nph}(T) M_b, h \rangle
= \langle E_{\nph}(T), (a\otimes b)h \rangle
= \langle E_{\nph}(T), P((a\otimes b)h) \rangle\\
& = & \langle T, \nph NP((a\otimes b)h) \rangle.
\end{eqnarray*}
Using (\ref{eq_p}), we obtain
\begin{eqnarray*}
& &
\chi_{K\times K} NP((a\otimes b)h) (s,t)
 =
\chi_K(s)\chi_K(t) P((a\otimes b)h)(ts^{-1})\\
& = &
\chi_K(s)\chi_K(t) \int ((a\otimes b)h)(st^{-1}x,x) dx\\
& = &
\chi_K(s)\chi_K(t) \int ((a\otimes b)h)(sr,tr) dr\\
& = &
\chi_K(s)\chi_K(t) \int_{K^{-1}K} a(sr)b(tr) h(sr,tr) dr\\
& = &
\chi_K(s)\chi_K(t) \int_{K^{-1}K} \chi_K(sr)a(sr)\chi_K(tr)b(tr) h(sr,tr) dr\\
& = &
\chi_{K\times K} NP((a\chi_K\otimes b\chi_K)h) (s,t).
\end{eqnarray*}
It follows that
\begin{eqnarray*}
\langle T, \chi_{K\times K} \tilde{h}^{\pi}_{i,j}\rangle
& = &
\langle T, \chi_{K\times K} NP((u^{\pi}_{i,j}\otimes 1)h)\rangle\\
& = &
\langle T, \chi_{K\times K} NP((u^{\pi}_{i,j}\chi_{K} \otimes \chi_K)h)\rangle = 0,
\end{eqnarray*}
for all $\pi\in \widehat{G}$ and all $i,j$.
By (\ref{eq_series}), $\langle T, \chi_{K\times K} h^{\pi}_{i,j}\rangle = 0$,
and by \cite[Lemma 3.14]{akt}, $\langle T,h\rangle = 0$.
\end{proof}

As an immediate consequence of Theorem \ref{th_akt0} we
obtain the following transference result.

\begin{corollary}\label{c_trans}
Let $J\subseteq A(G)$ be a closed ideal. The following are equivalent:

(i) \ $J$ is an ideal of multiplicity;

(ii) $\Bim(J^{\perp})$ is a bimodule of multiplicity.
\end{corollary}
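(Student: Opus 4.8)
The plan is to derive Corollary \ref{c_trans} directly from the equality (\ref{eq_bim0}) established in Theorem \ref{th_akt0}, reading off each implication from the definitions of the multiplicity notions. The key observation is that Definition \ref{d_mi} says $J$ is an ideal of multiplicity precisely when $J^{\perp}\cap C_r^*(G)\neq\{0\}$, while Definition \ref{d_mb} says $\Bim(J^{\perp})$ is a bimodule of multiplicity precisely when $\Bim(J^{\perp})\cap\cl K\neq\{0\}$. Theorem \ref{th_akt0} identifies $\Bim(J^{\perp})\cap\cl K$ with $\Bim_0(J^{\perp}\cap C_r^*(G))$, so the entire argument reduces to showing that the subspace $J^{\perp}\cap C_r^*(G)$ of $\vn(G)$ is trivial if and only if the masa-bimodule $\Bim_0(J^{\perp}\cap C_r^*(G))$ it generates is trivial.

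First I would prove (i)$\Rightarrow$(ii). Suppose $J$ is an ideal of multiplicity, so there exists a nonzero $Y\in J^{\perp}\cap C_r^*(G)$. Since $\Bim_0(C_r^*(G))=\cl K$ by the Stone--von Neumann Theorem (as noted in the excerpt), the operator $Y$ cannot be annihilated by compression against all compactly supported $a,b\in L^{\infty}(G)$; that is, there exist such $a,b$ with $M_b Y M_a\neq 0$. This nonzero compact operator lies in $\Bim_0(J^{\perp}\cap C_r^*(G))$, which by (\ref{eq_bim0}) equals $\Bim(J^{\perp})\cap\cl K$. Hence $\Bim(J^{\perp})\cap\cl K\neq\{0\}$ and $\Bim(J^{\perp})$ is a bimodule of multiplicity.

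The reverse implication (ii)$\Rightarrow$(i) is even more immediate. If $\Bim(J^{\perp})$ is a bimodule of multiplicity, then $\Bim(J^{\perp})\cap\cl K\neq\{0\}$, so by (\ref{eq_bim0}) we have $\Bim_0(J^{\perp}\cap C_r^*(G))\neq\{0\}$. But $\Bim_0(J^{\perp}\cap C_r^*(G))$ is, by its very definition, a norm-closed span of operators of the form $M_b Y M_a$ with $Y\in J^{\perp}\cap C_r^*(G)$; if the generating space $J^{\perp}\cap C_r^*(G)$ were $\{0\}$, then every such operator would vanish and $\Bim_0$ would be trivial. Since it is nontrivial, $J^{\perp}\cap C_r^*(G)\neq\{0\}$, so $J$ is an ideal of multiplicity.

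I expect essentially no serious obstacle here, as all the analytic content has already been absorbed into Theorem \ref{th_akt0}; the corollary is a formal unwinding of definitions. The only point requiring the slightest care is the forward direction, where one must invoke $\Bim_0(C_r^*(G))=\cl K$ (equivalently, the Stone--von Neumann Theorem) to guarantee that a nonzero element of $C_r^*(G)$ survives compression by some pair of compactly supported multiplication operators, rather than being annihilated by all of them.
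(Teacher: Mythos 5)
Your proof is correct and is essentially identical to the paper's: both directions are read off from Theorem \ref{th_akt0} combined with the observation that $\Bim\mbox{}_0(J^{\perp}\cap C_r^*(G))$ is nonzero exactly when $J^{\perp}\cap C_r^*(G)$ is. The only cosmetic quibble is in the forward direction, where the paper simply picks a compact $K$ with $M_{\chi_K}TM_{\chi_K}\neq 0$ (immediate from $M_{\chi_{K_n}}\to I$ strongly as $K_n$ exhausts $G$), whereas your appeal to $\Bim\mbox{}_0(C_r^*(G))=\cl K$ does not by itself guarantee that an \emph{individual} nonzero operator survives some compression --- though the fact you need is elementary and true.
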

\begin{proof}
(ii)$\Rightarrow$(i) If $J^{\perp}\cap C_r^*(G) = \{0\}$ then $\Bim_0(J^{\perp}\cap C_r^*(G)) = \{0\}$
and, by Theorem \ref{th_akt0}, $\Bim(J^{\perp})\cap \cl K =\{0\}$, a contradiction.

(i)$\Rightarrow$(ii) If $T$ is a non-zero operator in $J^{\perp}\cap C_r^*(G)$ then
there exist a compact subset $K$ of $G$ such that $M_{\chi_K} T M_{\chi_K} \neq 0$.
Thus, $\Bim_0(J^{\perp}\cap C_r^*(G)) \neq \{0\}$ and, by Theorem \ref{th_akt0},
$\Bim(J^{\perp})\cap \cl K \neq \{0\}$.
\end{proof}

For a closed subset $E\subseteq G$, let
$$E^* = \{(s,t)\in G\times G : ts^{-1}\in E\}.$$
Corollary \ref{c_trans} has the following consequences, originally established in \cite{stt}.

\begin{corollary}\label{c_E}
Let $E\subseteq G$ be a closed set. Then

(i) \ $E$ is an $M$-set if and only if $E^*$ is an operator $M$-set;

(ii) $E$ is an $M_1$-set if and only if $E^*$ is an operator $M_1$-set.
\end{corollary}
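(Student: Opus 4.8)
The plan is to read off both equivalences from Corollary \ref{c_trans}, applied to the two extremal ideals with null set $E$. Recall the dictionary recorded in Section \ref{s_cr}: $E$ is an $M$-set (resp. $M_1$-set) exactly when $J(E)$ (resp. $I(E)$) is an ideal of multiplicity, and $E^*$ is an operator $M$-set (resp. operator $M_1$-set) exactly when $\frak{M}_{\max}(E^*)$ (resp. $\frak{M}_{\min}(E^*)$) is a bimodule of multiplicity. By Corollary \ref{c_trans}, $J(E)$ is an ideal of multiplicity if and only if $\Bim(J(E)^{\perp})$ is a bimodule of multiplicity, and likewise $I(E)$ is an ideal of multiplicity if and only if $\Bim(I(E)^{\perp})$ is a bimodule of multiplicity. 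Thus both parts of the corollary follow at once, provided the two identities
$$\Bim(J(E)^{\perp}) = \frak{M}_{\max}(E^*) \quad\text{and}\quad \Bim(I(E)^{\perp}) = \frak{M}_{\min}(E^*)$$
have been established.

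To prove these identities I would pass to preannihilators using Theorem \ref{th_akt}, which rewrites the left-hand sides as $\Sat(J(E))^{\perp}$ and $\Sat(I(E))^{\perp}$. It therefore suffices to show that $\Sat(I(E))$ is the space of all $h\in T(G)$ vanishing on $E^*$, and that $\Sat(J(E))$ is the space of all $h\in T(G)$ vanishing on some $\omega$-open neighbourhood of $E^*$. One inclusion in each case is straightforward from the identity $N(u)(s,t) = u(ts^{-1})$: if $u\in I(E)$ then $N(u)$ vanishes on $E^*$, while if $u$ has compact support disjoint from $E$ then $N(u)$ vanishes on an $\omega$-open neighbourhood of $E^*$. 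Passing to the generated $L^{\infty}(G)$-bimodules and using the description $\Sat(J)=\overline{[N(J)T(G)]}^{\|\cdot\|_T}$ then yields the two containments ``$\subseteq$''.

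I expect the reverse containments to be the main obstacle. They amount to the statement that the support of $\Bim(J(E)^{\perp})$ is exactly $E^*$, and that $J(E)$, being the smallest closed ideal with null set $E$, is carried by the saturation map onto the largest masa-bimodule with support $E^*$, while dually $I(E)$, the largest such ideal, is carried onto the smallest such masa-bimodule. Proving this requires the support calculus for $\omega$-closed sets together with the compatibility of the map $u\mapsto N(u)$ with neighbourhoods, that is, with the approximation built into the definition of $J(E)$ by functions of compact support disjoint from $E$; these ingredients are available from the theory developed in \cite{stt}, \cite{eks} and \cite{akt}. Once they are in place, the corollary is a formal consequence of Corollary \ref{c_trans} and the dictionary above.
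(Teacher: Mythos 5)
Your proposal follows essentially the same route as the paper: both reduce the corollary to Corollary \ref{c_trans} via the identities $\frak{M}_{\max}(E^*) = \Bim(J(E)^{\perp})$ and $\frak{M}_{\min}(E^*) = \Bim(I(E)^{\perp})$, which are precisely what the paper cites as \cite[Theorem 5.3]{akt} rather than re-deriving. The portion you flag as ``the main obstacle'' is therefore already covered by that citation, and your sketch of how it would be proved (via $\Sat$, Theorem \ref{th_akt} and the support calculus of \cite{eks}) points in the right direction.
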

\begin{proof}
By \cite[Theorem 5.3]{akt},
$\frak{M}_{\max}(E^*) = \Bim(J(E)^{\perp})$ and $\frak{M}_{\min}(E^*) = \Bim(I(E)^{\perp})$.
The claims now follow from Corollary \ref{c_trans}.
\end{proof}

\section{Preservation}\label{s_pres}

In this section, we show that the property of being an ideal of multiplicity
(resp. uniqueness) is preserved under some natural operations.

\subsection{Intersections}
Suppose that $E_1, E_2\subseteq G$ are $U_1$-sets (that is, that they are not $M_1$-sets).
It was shown in \cite{stt} that $E_1\cup E_2$ is an $U_1$-set.
Since, trivially, $I(E_1)\cap I(E_2) = I(E_1\cup E_2)$,
this result equivalently says that if $I(E_1)$ and $I(E_2)$ are ideals of uniqueness
then so is their intersection.
This is the motivation behind the present subsection, whose main result,
Corollary \ref{c_int}, will be obtained as a consequence of the more general
Theorem \ref{th_sumsink}. We start with a lemma which we believe may be interesting in its own right.

\begin{lemma}\label{l_weaksums}
Let $(X,\mu)$ be a standard $\sigma$-finite measure space and
$\cl S,\cl T\subseteq T(X)$ be $\frak{S}(X)$-invariant subspaces.
If $\cl S + \cl T$ is weak* dense in $T(X)$ then
$$\overline{\cl S}^{w^*} \cap \overline{\cl T}^{w^*} = \overline{\cl S \cap \cl T}^{w^*}.$$
\end{lemma}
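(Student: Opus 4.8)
The inclusion $\overline{\cl S\cap\cl T}^{w^*}\subseteq\overline{\cl S}^{w^*}\cap\overline{\cl T}^{w^*}$ is immediate, since $\cl S\cap\cl T$ sits inside each of $\cl S$, $\cl T$ and the right-hand side is $w^*$-closed; the entire content lies in the reverse inclusion. My plan is to phrase this as a density statement: every $h\in\overline{\cl S}^{w^*}\cap\overline{\cl T}^{w^*}$ is a $w^*$-limit of elements of $\cl S\cap\cl T$. Passing to preannihilators in the predual $\cl K$ of $T(X)$, this becomes the norm-density of $\cl S_{\perp}+\cl T_{\perp}$ in $(\cl S\cap\cl T)_{\perp}$, and the hypothesis that $\cl S+\cl T$ is $w^*$-dense translates exactly into the transversality $\cl S_{\perp}\cap\cl T_{\perp}=\{0\}$; this is the only place the hypothesis enters. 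One should keep in mind that without extra structure the statement is false (two $w^*$-dense subspaces of a dual space can intersect in $\{0\}$), so the $\frak{S}(X)$-invariance must be used essentially.

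Since the $w^*$-topology on $T(X)$ is $\sigma(T(X),\cl K)$, pairing against a fixed compact operator is $w^*$-continuous, and membership in $\overline{\cl S\cap\cl T}^{w^*}$ can be tested on finite families of functionals. For compact operators $T_1,\dots,T_m$ let $\Psi\colon T(X)\to\bb{C}^m$ be $\Psi(g)=(\langle T_1,g\rangle,\dots,\langle T_m,g\rangle)$; then $h\in\overline{\cl S\cap\cl T}^{w^*}$ precisely when $\Psi(h)\in\Psi(\cl S\cap\cl T)$ for every such $\Psi$ (the image of a subspace of $\bb{C}^m$ being automatically closed). As $\Psi$ is $w^*$-continuous one has $\Psi(h)\in\Psi(\cl S)\cap\Psi(\cl T)$ for $h$ in the left-hand side, while the $w^*$-density of $\cl S+\cl T$ forces $\Psi(\cl S)+\Psi(\cl T)=\Psi(T(X))$. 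Thus the lemma reduces to the finite-dimensional \emph{slicing identity}
\[
\Psi(\cl S\cap\cl T)=\Psi(\cl S)\cap\Psi(\cl T),
\]
to be proved for every $w^*$-continuous finite-rank $\Psi$ with $\Psi(\cl S)+\Psi(\cl T)=\Psi(T(X))$.

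To prepare this core step I would first normalise $h$. Using that the multipliers $\chi_{K_n\times K_n}$, for an increasing sequence of finite-measure sets $K_n$ exhausting $X$, lie in $\frak{S}(X)$ and form a norm-approximate identity of $T(X)$, and that $\cl S$, $\cl T$, $\cl S\cap\cl T$ are $\frak{S}(X)$-invariant while $\overline{\cl S\cap\cl T}^{w^*}$ is norm-closed, it suffices to treat $h$ of the form $h=\chi_{K\times K}h$, and by restriction one may assume $\mu(X)<\infty$. This localisation is meant to render the module structure of $\cl S$, $\cl T$ over $\frak{S}(X)$, in particular over the rectangles $\chi_{\alpha}\otimes\chi_{\beta}$, manageable in the final argument.

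\textbf{The main obstacle} is exactly the slicing identity. The inclusion $\Psi(\cl S\cap\cl T)\subseteq\Psi(\cl S)\cap\Psi(\cl T)$ is trivial, and after unwinding the remaining inclusion amounts to the modular identity $\ker\Psi\cap(\cl S+\cl T)=(\ker\Psi\cap\cl S)+(\ker\Psi\cap\cl T)$: given $s\in\cl S$ and $t\in\cl T$ with $\Psi(s)=\Psi(t)$, one must produce an honest element $r\in\cl S\cap\cl T$ with $\Psi(r)=\Psi(s)$. Such distributivity fails for three arbitrary subspaces, so the argument must exploit that $\cl S$ and $\cl T$ are $\frak{S}(X)$-modules: multiplying by suitable Schur multipliers and enlarging the test family through $\langle S_{\nph}(T_i),r\rangle=\langle T_i,\nph r\rangle$ (with $S_{\nph}(T_i)$ again compact) should allow one to build the required common element by a simultaneous localisation, the transversality $\cl S_{\perp}\cap\cl T_{\perp}=\{0\}$ guaranteeing no obstruction. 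This is in the spirit of the measurable-support and approximation techniques for masa-bimodules underlying the intersection formulas referenced through \cite{et2}, and is the step I expect to demand the most care.
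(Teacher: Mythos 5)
Your preliminary reductions are sound: the bipolar-type criterion (membership in a weak* closure tested through weak*-continuous finite-rank maps $\Psi$, whose images of subspaces are automatically closed), the identity $\Psi(\cl S)+\Psi(\cl T)=\Psi(T(X))$ forced by the weak* density of $\cl S+\cl T$, and the localisation to $h=\chi_{K\times K}h$ on a finite measure space all check out, and your warning that the statement fails for arbitrary subspaces is well taken. But the proposal has a genuine gap: the entire content of the lemma has been repackaged into the ``slicing identity'' $\Psi(\cl S\cap\cl T)=\Psi(\cl S)\cap\Psi(\cl T)$, equivalently the modular law $(\cl S+\ker\Psi)\cap(\cl T+\ker\Psi)=(\cl S\cap\cl T)+\ker\Psi$, and for that step you offer only a plan (``multiplying by suitable Schur multipliers \dots should allow one to build the required common element''), not an argument. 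Worse, this reformulation is at least as strong as the lemma itself --- what you actually need is only that $\Psi(h)\in\Psi(\cl S\cap\cl T)$ for the specific $h$ at hand, whereas you are demanding a distributivity law for \emph{every} weak*-closed finite-codimensional kernel, with no mechanism proposed for producing the element $r\in\cl S\cap\cl T$ with $\Psi(r)=\Psi(s)=\Psi(t)$. As it stands, nothing in the write-up uses the $\frak{S}(X)$-invariance beyond the (inessential here) localisation, so the proof is not complete.

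For comparison, the paper's argument avoids any finite-dimensional slicing and instead exploits the module structure directly through two observations. First, every $\frak{S}(X)$-invariant subspace $\cl R$ satisfies $\cl R\subseteq\overline{\frak{S}(X)\cap\cl R}^{\|\cdot\|}$: truncating $h=\sum f_i\otimes g_i$ by $\chi_{Y_N\times Z_N}$, where $Y_N,Z_N$ are the sets on which $\sum|f_i|^2$, respectively $\sum|g_i|^2$, is bounded by $N$, produces Schur multipliers in $\cl R$ converging to $h$ in norm. Consequently one may take $h\in\frak{S}(X)$, and then multiplication by $h$ is weak* continuous on $T(X)$. Second, if $\psi\in\frak{S}(X)\cap\cl T$ and $h=\mbox{w}^*\mbox{-}\lim_i h_i$ with $h_i\in\frak{S}(X)\cap\cl S$, then each product $\psi h_i$ lies in $\cl S\cap\cl T$ (it is in $\cl S$ because $h_i\in\cl S$ and $\psi$ is a Schur multiplier, and in $\cl T$ because $\psi\in\cl T$ and $h_i$ is a Schur multiplier), whence $\psi h\in\overline{\cl S\cap\cl T}^{w^*}$; the same holds with the roles of $\cl S$ and $\cl T$ exchanged, so $\theta h\in\overline{\cl S\cap\cl T}^{w^*}$ for every $\theta\in\frak{S}(X)\cap\cl S+\frak{S}(X)\cap\cl T$. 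The first observation upgrades the weak* density of $\cl S+\cl T$ to weak* density of $\frak{S}(X)\cap\cl S+\frak{S}(X)\cap\cl T$, so the constant function $\chi_{X\times X}$ is a weak* limit of such $\theta$'s, and $h=\chi_{X\times X}h$ is then a weak* limit of the elements $\theta h$. If you want to salvage your approach, this ``approximate the identity multiplier from inside $\cl S+\cl T$ and multiply by $h$'' device is the missing idea you would have to substitute for the unproven slicing identity.
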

\begin{proof}
The inclusion
$$\overline{\cl S \cap \cl T}^{w^*}\subseteq \overline{\cl S}^{w^*} \cap \overline{\cl T}^{w^*}$$
is trivial.
Let $h\in \overline{\cl S}^{w^*} \cap \overline{\cl T}^{w^*}$.
Suppose that $X_n\subseteq X$, $n\in \bb{N}$, are measurable subsets of finite measure,
such that $X_n\subseteq X_{n+1}$ for every $n$ and
$X = \cup_{n=1}^{\infty} X_n$.
Since $\chi_{X_n\times X_n}\in \frak{S}(X)$, by weak* continuity we have that
$$\chi_{X_n\times X_n} h\in \overline{\cl S}^{w^*} \cap \overline{\cl T}^{w^*}, \ \ \ n\in \bb{N}.$$
Since $\chi_{X_n\times X_n} h\stackrel{w^*}{\longrightarrow}_{n\to\infty} h$,
it suffices to show that $\chi_{X_n\times X_n} h\in \overline{\cl S \cap \cl T}^{w^*}$ for every $n\in \bb{N}$.
We may thus assume that the measure $\mu$ is finite.
Note that, in this case, $\frak{S}(X)\subseteq T(X)$, as the constant function $1$ is in $T(X)$.

Write $h = \sum_{i=1}^{\infty} f_i\otimes g_i$, where $(f_i)_{i\in \bb{N}}$ and $(g_i)_{i\in \bb{N}}$
are square summable sequences in $L^2(X)$.
For $N\in \bb{N}$, let
$$Y_N = \{x\in X : \sum_{i=1}^{\infty} |f_i(x)|^2 \leq N\}$$
and
$$Z_N = \{x\in X : \sum_{i=1}^{\infty} |g_i(x)|^2 \leq N\}.$$
Then $(Y_N)_{N\in \bb{N}}$ (resp. $(Z_N)_{N\in \bb{N}}$)
is an ascending sequence whose union has full measure in $X$.
Moreover, $\chi_{Y_n\times Z_N} h\in \frak{S}(X)$ , $N\in \bb{N}$ (see \cite{peller}),
and $\chi_{Y_n\times Z_N} h \to h$ in the norm of $T(X)$.
We thus showed that 
\begin{equation}\label{eq_fr}
\cl R\subseteq \overline{\frak{S}(X)\cap \cl R}^{\|\cdot\|},
\end{equation}
for every $\frak{S}(X)$-invariant subspace $\cl R$ of $T(X)$;
we may hence assume that $h\in \frak{S}(X)$.

By (\ref{eq_fr}), $\overline{\cl S}^{w^*} = \overline{\frak{S}(X)\cap \cl S}^{w^*}$.
Thus, there exists a net $(h_i)\subseteq \frak{S}(X)\cap  \cl S$
such that $h = $w$^*$-$\lim_i h_i$.
Since $\cl S$ and $\cl T$ are invariant under $\frak{S}(X)$,
it follows that if $\psi\in \frak{S}(X)\cap \cl T$
then
$$\psi h = \mbox{w}^*\mbox{-}\lim\mbox{}_i \psi h_i\in \overline{\cl S\cap \cl T}^{w^*}.$$
Similarly, if $\nph\in \frak{S}(X)\cap \cl S$ then $\nph h\in \overline{\cl S\cap \cl T}^{w^*}$.
Thus,
$$\theta h\in \overline{\cl S\cap \cl T}^{w^*}, \ \mbox{ for every }
\theta\in \frak{S}(X) \cap \cl S + \frak{S}(X) \cap \cl T.$$
Inclusion (\ref{eq_fr}) 
implies that $\frak{S}(X) \cap \cl S + \frak{S}(X) \cap \cl T$ is weak* dense in $T(X)$.
Thus, the constant function $\chi_{X\times X}$ is the limit
of a net $(\theta_i)_i\subseteq \frak{S}(X) \cap \cl S + \frak{S}(X) \cap \cl T$
in the weak* topology of $T(X)$.
It follows that $h = \chi_{X\times X}h$ is the limit of the net
$(\theta_i h)_i$ in the weak* topology of $T(X)$, and hence $h\in \overline{\cl S\cap \cl T}^{w^*}$.
\end{proof}

\begin{theorem}\label{th_sumsink}
Let $(X,\mu)$ be a standard $\sigma$-finite measure space and
$\cl U,\cl V\subseteq \cl B(L^2(X))$ be weak* closed masa-bimodules.
If $\cl U \cap \cl V$ is a bimodule of uniqueness then
$$\overline{\cl U + \cl V}^{w^*} \cap \cl K = \overline{\cl U\cap \cl K + \cl V \cap \cl K}^{\|\cdot\|}.$$
\end{theorem}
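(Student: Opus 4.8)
The plan is to dualise the whole statement into the Banach space $T(X)$, exploiting that $T(X)$ carries two dualities simultaneously — it is the predual of $\cl B(L^2(X))$ and the dual of $\cl K$ — and thereby to reduce the assertion to Lemma \ref{l_weaksums}. I set $\cl S = \cl U_\perp$ and $\cl T = \cl V_\perp$, the preannihilators taken in $T(X)$ relative to the pairing $\langle \cl B(L^2(X)), T(X)\rangle$. Since $\cl U$ and $\cl V$ are weak* closed masa-bimodules, $\cl S$ and $\cl T$ are $\frak{S}(X)$-invariant subspaces of $T(X)$ (the predual form of the masa-bimodule property; see \cite{eks}, \cite{st1}), and $\cl S^\perp = \cl U$, $\cl T^\perp = \cl V$ as annihilators in $\cl B(L^2(X))$. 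Throughout, $w^*$ on $T(X)$ denotes the topology $\sigma(T(X),\cl K)$ coming from $T(X) = \cl K^*$; this is exactly the weak* topology of Lemma \ref{l_weaksums}, and must be kept distinct from the weak* topology $\sigma(\cl B(L^2(X)),T(X))$ in which $\overline{\cl U+\cl V}^{w^*}$ is formed.

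The first step is the duality identity
\[
(\cl U\cap\cl K)^\perp = \overline{\cl S}^{w^*},
\]
with the analogue for $\cl V$ and $\cl T$, the annihilator on the left being taken in $T(X) = \cl K^*$. The inclusion $\overline{\cl S}^{w^*}\subseteq (\cl U\cap\cl K)^\perp$ is immediate, as $\cl S$ annihilates $\cl U\cap\cl K$ and annihilators are $w^*$-closed. For the reverse inclusion I compute the preannihilator in $\cl K$: since $\cl U$ is weak* closed one has $\cl S^\perp = \cl U$ in $\cl B(L^2(X))$, whence $\cl S_\perp = \cl U\cap\cl K$; the bipolar theorem in the pairing $\langle \cl K, T(X)\rangle$ then gives $(\cl U\cap\cl K)^\perp = \big((\overline{\cl S}^{w^*})_\perp\big)^\perp = \overline{\cl S}^{w^*}$.

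With this identity I rewrite both sides of the claim as preannihilators in $\cl K$. On the one hand, $\overline{\cl U+\cl V}^{w^*} = (\cl S\cap\cl T)^\perp$ (annihilator in $\cl B(L^2(X))$), so intersecting with $\cl K$ yields
\[
\overline{\cl U+\cl V}^{w^*}\cap\cl K = (\cl S\cap\cl T)_\perp = \big(\overline{\cl S\cap\cl T}^{w^*}\big)_\perp .
\]
On the other hand, the bipolar theorem in $\cl K$ together with the first step gives
\[
\overline{\cl U\cap\cl K + \cl V\cap\cl K}^{\|\cdot\|} = \big((\cl U\cap\cl K)^\perp\cap(\cl V\cap\cl K)^\perp\big)_\perp = \big(\overline{\cl S}^{w^*}\cap\overline{\cl T}^{w^*}\big)_\perp .
\]
Thus the theorem is equivalent to the equality $\overline{\cl S}^{w^*}\cap\overline{\cl T}^{w^*} = \overline{\cl S\cap\cl T}^{w^*}$, which is precisely the conclusion of Lemma \ref{l_weaksums}.

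It remains to check the hypothesis of that lemma, namely that $\cl S+\cl T$ is weak* dense in $T(X)$; this is the only point at which the uniqueness assumption enters, and the step I would verify most carefully. Taking preannihilators in $\cl K$ and using $\cl S_\perp = \cl U\cap\cl K$ and $\cl T_\perp = \cl V\cap\cl K$, I obtain
\[
(\cl S+\cl T)_\perp = \cl S_\perp\cap\cl T_\perp = (\cl U\cap\cl K)\cap(\cl V\cap\cl K) = (\cl U\cap\cl V)\cap\cl K .
\]
Since $\cl U\cap\cl V$ is a bimodule of uniqueness, the right-hand side is $\{0\}$, so $\cl S+\cl T$ has trivial preannihilator in $\cl K$ and is therefore $\sigma(T(X),\cl K)$-dense in $T(X)$. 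Applying Lemma \ref{l_weaksums} and taking preannihilators in $\cl K$ then completes the proof. The main obstacle is bookkeeping: keeping the two weak* topologies on $T(X)$ apart and establishing the identity $(\cl U\cap\cl K)^\perp = \overline{\cl S}^{w^*}$, which is exactly what makes ``intersecting with $\cl K$'' on the operator side correspond to ``taking the weak* closure'' on the predual side.
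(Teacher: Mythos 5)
Your proposal is correct and follows essentially the same route as the paper: both arguments set $\cl S=\cl U_\perp$, $\cl T=\cl V_\perp$, verify that the uniqueness of $\cl U\cap\cl V$ makes $\cl S+\cl T$ dense in $T(X)$ for the $\sigma(T(X),\cl K)$-topology, and then invoke Lemma \ref{l_weaksums}. The only difference is presentational — you phrase the reduction through explicit bipolar identities such as $(\cl U\cap\cl K)^{\perp}=\overline{\cl S}^{w^*}$, whereas the paper chases a single annihilating element $h$ — and your bookkeeping of the two dualities on $T(X)$ is accurate.
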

\begin{proof}
Let $\cl S = \cl U_{\perp}$ and $\cl T = \cl V_{\perp}$.
Since $\cl U$ and $\cl V$ are weak* closed masa-bimodules,
$\cl S$ and $\cl T$ are invariant under $\frak{S}(X)$.
Furthermore,
$$\overline{\cl S + \cl T}^{w^*} = (\cl U\cap \cl V\cap \cl K)^{\perp} = T(X).$$

It is trivial that
$$\overline{\cl U\cap \cl K + \cl V \cap \cl K}^{\|\cdot\|} \subseteq \overline{\cl U + \cl V}^{w^*} \cap \cl K.$$
To show equality, suppose that $h\in T(X)$ annihilates $\cl U\cap \cl K + \cl V \cap \cl K$.
Then $h\in \overline{\cl S}^{w^*}\cap \overline{\cl T}^{w^*}$ and hence, by Lemma \ref{l_weaksums},
$h\in  \overline{\cl S \cap \cl T}^{w^*}$. Since each element of $\cl S \cap \cl T$
annihilates $\overline{\cl U + \cl V}^{w^*}$, we have that $h$ annihilates
$\overline{\cl U + \cl V}^{w^*} \cap \cl K$.
The proof is complete.
\end{proof}

\begin{corollary}\label{c_sumsuniq}
Let $(X,\mu)$ be a standard $\sigma$-finite measure space and
$\cl U,\cl V\subseteq \cl B(L^2(X))$ be weak* closed masa-bimodules.
If $\cl U$ and $\cl V$ are bimodules of uniqueness then so is
$\overline{\cl U + \cl V}^{w^*}$.
\end{corollary}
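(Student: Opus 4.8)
The plan is to deduce this directly from Theorem \ref{th_sumsink}, since the corollary is essentially the statement that Theorem \ref{th_sumsink} simplifies when we take $\cl U$ and $\cl V$ to be bimodules of uniqueness. First I would observe that to show $\overline{\cl U + \cl V}^{w^*}$ is a bimodule of uniqueness, I must verify two things in turn: that $\overline{\cl U + \cl V}^{w^*}$ is itself a weak* closed masa-bimodule (so that Definition \ref{d_mb} applies to it), and that it contains no nonzero compact operator, i.e. $\overline{\cl U + \cl V}^{w^*} \cap \cl K = \{0\}$. The masa-bimodule property is routine: the sum $\cl U + \cl V$ is a masa-bimodule because each summand is, and multiplication by fixed elements of $\cl D$ on either side is weak* continuous, so the weak* closure remains a masa-bimodule.

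The substantive point is to apply Theorem \ref{th_sumsink}. To invoke it I need the hypothesis that $\cl U \cap \cl V$ is a bimodule of uniqueness. This follows immediately from the Remark after Definition \ref{d_mb}: since $\cl U \cap \cl V \subseteq \cl U$ and $\cl U$ is a bimodule of uniqueness, so is $\cl U \cap \cl V$. (One should note that $\cl U \cap \cl V$ is again a weak* closed masa-bimodule, being an intersection of two such, so the Remark genuinely applies.) With that hypothesis secured, Theorem \ref{th_sumsink} yields
$$\overline{\cl U + \cl V}^{w^*} \cap \cl K = \overline{\cl U\cap \cl K + \cl V \cap \cl K}^{\|\cdot\|}.$$

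Now I close the argument by using the hypothesis on $\cl U$ and $\cl V$ individually. Since $\cl U$ is a bimodule of uniqueness, $\cl U \cap \cl K = \{0\}$, and likewise $\cl V \cap \cl K = \{0\}$. Hence the right-hand side above is $\overline{\{0\}}^{\|\cdot\|} = \{0\}$, so $\overline{\cl U + \cl V}^{w^*} \cap \cl K = \{0\}$, which is exactly the assertion that $\overline{\cl U + \cl V}^{w^*}$ is a bimodule of uniqueness. I do not anticipate any genuine obstacle here, since all the analytic difficulty has already been absorbed into Lemma \ref{l_weaksums} and Theorem \ref{th_sumsink}; the only care needed is the bookkeeping point that every space appearing is a weak* closed masa-bimodule so that the definitions and the preceding Remark are legitimately applicable.
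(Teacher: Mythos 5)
Your argument is correct and is essentially identical to the paper's proof: the paper likewise observes that $\cl U\cap\cl V$ is a bimodule of uniqueness because it is contained in $\cl U$, and then concludes via Theorem \ref{th_sumsink}. You have merely spelled out the final bookkeeping step (that $\cl U\cap\cl K=\cl V\cap\cl K=\{0\}$ forces the right-hand side of the identity to vanish), which the paper leaves implicit.
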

\begin{proof}
Since $\cl U$ is a bimodule of uniqueness, so is $\cl U\cap \cl V$.
The conclusion now follows from Theorem \ref{th_sumsink}.
\end{proof}

For the rest of this subsection we assume that $G$ is a second countable locally compact group.

\begin{corollary}\label{c_sumsij}
Let $J_1$ and $J_2$ be
closed ideals of $A(G)$. If $\overline{J_1 + J_2}$ is an ideal of uniqueness then
$$\overline{\Bim\mbox{}_0(J_1^{\perp}\cap C_r^*(G)) + \Bim\mbox{}_0(J_2^{\perp}\cap C_r^*(G))}^{\|\cdot\|}
= \Bim\mbox{}_0((J_1 \cap J_2)^{\perp}\cap C_r^*(G)).$$
\end{corollary}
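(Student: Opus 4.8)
The plan is to apply Theorem \ref{th_sumsink} to the weak* closed masa-bimodules $\cl U = \Bim(J_1^{\perp})$ and $\cl V = \Bim(J_2^{\perp})$. First I would use Theorem \ref{th_akt0} to translate the two factors appearing on the right-hand side of the conclusion of Theorem \ref{th_sumsink}: it gives $\cl U\cap\cl K = \Bim\mbox{}_0(J_1^{\perp}\cap C_r^*(G))$ and $\cl V\cap\cl K = \Bim\mbox{}_0(J_2^{\perp}\cap C_r^*(G))$, so that $\overline{(\cl U\cap\cl K)+(\cl V\cap\cl K)}^{\|\cdot\|}$ is precisely the left-hand side of the identity to be proved. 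It then remains to verify the hypothesis of Theorem \ref{th_sumsink} and to identify $\overline{\cl U+\cl V}^{w^*}\cap\cl K$ with the right-hand side.

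The step I expect to be the main obstacle is checking that $\cl U\cap\cl V$ is a bimodule of uniqueness, since intersections of masa-bimodules are in general poorly behaved; here the $\Sat$--$\Bim$ duality is what rescues the argument. By Theorem \ref{th_akt}, $\cl U = \Sat(J_1)^{\perp}$ and $\cl V = \Sat(J_2)^{\perp}$, whence $\cl U\cap\cl V = (\Sat(J_1)+\Sat(J_2))^{\perp}$. Using the description $\Sat(J) = \overline{[N(J)T(G)]}^{\|\cdot\|_T}$ together with the linearity of $u\mapsto N(u)$, I would show that $\overline{\Sat(J_1)+\Sat(J_2)}^{\|\cdot\|_T} = \Sat(\overline{J_1+J_2})$; taking annihilators and applying Theorem \ref{th_akt} once more to the closed ideal $\overline{J_1+J_2}$ then yields the intersection identity $\cl U\cap\cl V = \Bim((\overline{J_1+J_2})^{\perp})$. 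Since $\overline{J_1+J_2}$ is assumed to be an ideal of uniqueness, Corollary \ref{c_trans} shows that $\Bim((\overline{J_1+J_2})^{\perp})$, and hence $\cl U\cap\cl V$, is a bimodule of uniqueness, so the hypothesis of Theorem \ref{th_sumsink} is met.

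Finally I would identify the remaining side. The standard duality $(J_1\cap J_2)^{\perp} = \overline{J_1^{\perp}+J_2^{\perp}}^{w^*}$ (valid for the closed ideals $J_1,J_2$), combined with the minimality of the weak* closed masa-bimodule generated by a set, gives $\overline{\cl U+\cl V}^{w^*} = \Bim((J_1\cap J_2)^{\perp})$: the inclusion $\subseteq$ is immediate from $J_i^{\perp}\subseteq(J_1\cap J_2)^{\perp}$, while $\supseteq$ follows because $\overline{\cl U+\cl V}^{w^*}$ is a weak* closed masa-bimodule containing $J_1^{\perp}+J_2^{\perp}$, hence its weak* closure $(J_1\cap J_2)^{\perp}$, and therefore the bimodule this set generates. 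Applying Theorem \ref{th_akt0} to the closed ideal $J_1\cap J_2$ then gives $\overline{\cl U+\cl V}^{w^*}\cap\cl K = \Bim((J_1\cap J_2)^{\perp})\cap\cl K = \Bim\mbox{}_0((J_1\cap J_2)^{\perp}\cap C_r^*(G))$, which is exactly the required right-hand side, completing the proof.
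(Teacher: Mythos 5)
Your proof is correct and follows essentially the same route as the paper: apply Theorem \ref{th_sumsink} to $\cl U=\Bim(J_1^{\perp})$ and $\cl V=\Bim(J_2^{\perp})$, check the uniqueness hypothesis for $\cl U\cap\cl V$ using the assumption on $\overline{J_1+J_2}$, and translate all three compact parts via Theorem \ref{th_akt0}. The only difference is that where you re-derive the identities $\Bim(J_1^{\perp})\cap\Bim(J_2^{\perp})=\Bim((J_1+J_2)^{\perp})$ and $\overline{\Bim(J_1^{\perp})+\Bim(J_2^{\perp})}^{w^*}=\Bim((J_1\cap J_2)^{\perp})$ from Theorem \ref{th_akt} and standard annihilator duality, the paper simply quotes them from an earlier work; your derivations are sound.
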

\begin{proof}
By \cite[Corollary 4.4]{akt},
\begin{equation}\label{eq_j1j2}
\overline{\Bim(J_1^{\perp}) + \Bim(J_2^{\perp})}^{w^*} = \Bim((J_1\cap J_2)^{\perp})
\end{equation}
and
\begin{equation}\label{eq_j1j22}
\Bim(J_1^{\perp}) \cap \Bim(J_2^{\perp}) = \Bim((J_1+ J_2)^{\perp}).
\end{equation}
By (\ref{eq_j1j22}), Theorem \ref{th_akt0} and the assumption that $\overline{J_1 + J_2}$ is an ideal of uniqueness,
we have that
$$\Bim(J_1^{\perp}) \cap \Bim(J_2^{\perp})\cap \cl K =  \Bim\mbox{}_0((J_1 + J_2)^{\perp}\cap C_r^*(G)) = \{0\},$$
that is, $\Bim(J_1^{\perp}) \cap \Bim(J_2^{\perp})$ is a bimodule of uniqueness.
By (\ref{eq_j1j2}) and Theorem \ref{th_sumsink},
$$\Bim((J_1\cap J_2)^{\perp})\cap \cl K =
\overline{\Bim(J_1^{\perp})\cap \cl K + \Bim(J_2^{\perp})\cap \cl K}^{\|\cdot\|}.$$
The claim is now immediate from Theorem \ref{th_akt0}.
\end{proof}

\begin{corollary}\label{c_int}
Let $J_1$ and $J_2$ be ideals of uniqueness of $A(G)$.
Then $J_1\cap J_2$ is an ideal of uniqueness.
\end{corollary}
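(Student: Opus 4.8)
The plan is to reduce the statement to the operator side via the transference result, Corollary \ref{c_trans}, and then invoke the preservation property for bimodules of uniqueness, Corollary \ref{c_sumsuniq}. First I would note that $J_1\cap J_2$ is again a closed ideal of $A(G)$, so Corollary \ref{c_trans} applies to it: to show that $J_1\cap J_2$ is an ideal of uniqueness, it suffices, by the equivalence in Corollary \ref{c_trans} (read in its contrapositive form), to show that the weak* closed masa-bimodule $\Bim((J_1\cap J_2)^{\perp})$ is a bimodule of uniqueness.

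The next step is to rewrite this bimodule as a weak* closed sum. Since $J_1$ and $J_2$ are ideals of uniqueness, Corollary \ref{c_trans} gives that $\Bim(J_1^{\perp})$ and $\Bim(J_2^{\perp})$ are both bimodules of uniqueness. By the identity (\ref{eq_j1j2}), imported from \cite[Corollary 4.4]{akt}, we have
$$\Bim((J_1\cap J_2)^{\perp}) = \overline{\Bim(J_1^{\perp}) + \Bim(J_2^{\perp})}^{w^*}.$$
Thus the bimodule we must analyse is precisely the weak* closed sum of two bimodules of uniqueness.

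The conclusion then follows from Corollary \ref{c_sumsuniq}, which asserts that the weak* closed sum of two bimodules of uniqueness is again a bimodule of uniqueness; combining this with the reduction of the first paragraph completes the argument. I expect the proof itself to be short, since all the substantive work has already been absorbed into Theorem \ref{th_sumsink} (and, underneath it, the intersection formula of Lemma \ref{l_weaksums}). The only point requiring care is that the equivalence in Corollary \ref{c_trans} is being used in \emph{both} directions---forward, to pass from the ideals $J_1, J_2$ to the operator side, and backward, to transport the uniqueness of $\Bim((J_1\cap J_2)^{\perp})$ back to $J_1\cap J_2$---so I would confirm at the outset that $J_1\cap J_2$ really is a closed ideal of $A(G)$, ensuring that the hypothesis of Corollary \ref{c_trans} is satisfied in each application.
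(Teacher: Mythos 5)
Your argument is correct and follows essentially the same route as the paper: the paper deduces the corollary from Corollary \ref{c_sumsij} (after noting that $\overline{J_1+J_2}$ inherits uniqueness from $J_1$), whereas you pass through Corollary \ref{c_trans} together with the identity (\ref{eq_j1j2}) and Corollary \ref{c_sumsuniq}, but both derivations rest on exactly the same machinery, namely the transference theorem (Theorem \ref{th_akt0}), the bimodule identity from \cite[Corollary 4.4]{akt}, and the intersection formula of Theorem \ref{th_sumsink}. Your closing remark that $J_1\cap J_2$ is a closed ideal, so that Corollary \ref{c_trans} applies in both directions, is the right point to check and is indeed all that is needed.
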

\begin{proof}
Since $J_1$ is an ideal of uniqueness, so is $\overline{J_1 + J_2}$.
The claim now follows from Corollary \ref{c_sumsij}.
\end{proof}

It is easy to see that the property of being an ideal of uniqueness is not preserved
under countable intersections. For an example, let $G = \bb{R}$ and $\{r_k\}_{k\in \bb{N}}$
be an enumeration of the rationals. Set $J_k = I(\{r_k\})$, $k\in \bb{N}$.
Then $J_k^{\perp} = \bb{C}\lambda_{r_k}$ and hence $J_k$ is an ideal of uniqueness;
however, $\cap_{k=1}^{\infty} J_k = \{0\}$ is not.

\subsection{Tensor products}
Let $G_1$ and $G_2$ be second countable locally compact groups.
Suppose that $E_i\subseteq G_i$, $i = 1,2$, are $M$-sets (resp. $M_1$-sets).
It was shown in \cite[Corollary 4.14]{stt} that, in this case,
$E_1\times E_2$ is an $M$-set (resp. $M_1$-set).
We now generalise this fact to closed ideals.
We denote by $\otimes$ the algebraic tensor product of vector spaces.
Note that $A(G_1)\otimes A(G_2)$ can be considered as a
dense subalgebra of the Fourier algebra $A(G_1\times G_2)$;
in fact, the latter can be canonically identified with the
operator projective tensor product $A(G_1)\otimes_{\wedge} A(G_2)$ of $A(G_1)$ and $A(G_2)$
(we refer the reader to \cite{blm} for background on this tensor product).

\begin{theorem}\label{th_tp}
Let
$J_1\subseteq A(G_1)$ and $J_2\subseteq A(G_2)$ be closed ideals.
Set
$$J = \overline{ J_1\otimes A(G_2) + A(G_1)\otimes J_2}^{\|\cdot\|_A}.$$
The following are equivalent:

(i) \ $J_1$ and $J_2$ are ideals of multiplicity;

(ii) $J$ is an ideal of multiplicity.
\end{theorem}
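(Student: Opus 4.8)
The plan is to realise both $J^\perp$ and $C_r^*(G_1\times G_2)$ through the standard tensor decompositions
$$\vn(G_1\times G_2) = \vn(G_1)\overline{\otimes}\vn(G_2), \qquad C_r^*(G_1\times G_2) = C_r^*(G_1)\otimes_{\min} C_r^*(G_2),$$
whose common predual is $A(G_1\times G_2) = A(G_1)\widehat{\otimes} A(G_2)$, and then to pass between the two factors using slice maps. For $\omega\in A(G_2) = \vn(G_2)_*$ let $R_\omega$ be the right slice map, determined on elementary tensors by $R_\omega(S_1\otimes S_2) = \langle S_2,\omega\rangle S_1$; it is a normal bounded map $\vn(G_1)\overline{\otimes}\vn(G_2)\to\vn(G_1)$, and since $\omega$ also restricts to a bounded functional on $C_r^*(G_2)$, the same formula defines the minimal-tensor slice map $C_r^*(G_1\times G_2)\to C_r^*(G_1)$ (the two maps agree on the dense algebraic tensor product and hence coincide). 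Symmetrically, for $\rho\in A(G_1)$ let $L_\rho$ be the left slice map onto the second factor. Throughout I use the slice identity $\langle T,\rho\otimes\omega\rangle = \langle R_\omega(T),\rho\rangle = \langle L_\rho(T),\omega\rangle$, valid by weak* density of elementary tensors.

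For (i)$\Rightarrow$(ii), pick nonzero $T_i\in J_i^\perp\cap C_r^*(G_i)$ and set $T = T_1\otimes T_2$. Since the minimal tensor norm is faithful, $T\neq 0$, and $T\in C_r^*(G_1\times G_2)$. Computing on elementary tensors, $\langle T, u_1\otimes u_2\rangle = \langle T_1,u_1\rangle\langle T_2,u_2\rangle$, which vanishes whenever $u_1\in J_1$ or $u_2\in J_2$; hence $T$ annihilates both $J_1\otimes A(G_2)$ and $A(G_1)\otimes J_2$, so $T\in J^\perp$. Thus $J$ is an ideal of multiplicity.

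For (ii)$\Rightarrow$(i), fix a nonzero $T\in J^\perp\cap C_r^*(G_1\times G_2)$; I show each $J_i$ is an ideal of multiplicity. The slice identity shows that $T\in (J_1\otimes A(G_2))^\perp$ is equivalent to $R_\omega(T)\in J_1^\perp$ for every $\omega\in A(G_2)$. Since $J_1\otimes A(G_2)\subseteq J$ we have $T\in J^\perp\subseteq (J_1\otimes A(G_2))^\perp$, and combining this with $T\in C_r^*(G_1\times G_2)$ yields $R_\omega(T)\in J_1^\perp\cap C_r^*(G_1)$ for all $\omega$. If $J_1$ were an ideal of uniqueness this intersection would be $\{0\}$, forcing $R_\omega(T)=0$ for every $\omega\in A(G_2)$; since the elementary tensors $\rho\otimes\omega$ span a dense subspace of the predual $A(G_1)\widehat{\otimes} A(G_2)$, the slice identity then gives $\langle T,\rho\otimes\omega\rangle=0$ for a total set, whence $T=0$, a contradiction. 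Hence $J_1$ is an ideal of multiplicity, and the symmetric argument using $L_\rho$ gives the same for $J_2$.

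The main obstacle is not a single hard estimate but the bookkeeping around the slice maps: one must check that the \emph{same} functional $\omega\in A(G_2)$ induces a normal slice map on $\vn(G_1)\overline{\otimes}\vn(G_2)$ whose restriction to $C_r^*(G_1\times G_2)$ is precisely the minimal-tensor slice map, so that $R_\omega(T)$ lands in $J_1^\perp$ and in $C_r^*(G_1)$ simultaneously. Assembling the three tensor-product identifications above, together with the faithfulness of the minimal tensor norm and the totality of elementary tensors in $A(G_1)\widehat{\otimes} A(G_2)$, is where the care is needed; once these are in place, both implications are short.
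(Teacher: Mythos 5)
Your argument is correct and follows essentially the same route as the paper: both directions rest on the identifications $C_r^*(G_1\times G_2)\cong C_r^*(G_1)\otimes_{\min}C_r^*(G_2)$ and $A(G_1\times G_2)\cong A(G_1)\otimes_{\wedge}A(G_2)$, the forward implication uses the elementary tensor $T_1\otimes T_2$, and the reverse implication uses exactly the paper's slice-map argument (including the observation that norm continuity of $v\in A(G_2)$ on $C_r^*(G_2)$ makes the slice map carry $C_r^*(G_1\times G_2)$ into $C_r^*(G_1)$). The only cosmetic difference is that the paper packages the annihilator inclusions via the spatial and Fubini tensor products of $J_1^{\perp}$ and $J_2^{\perp}$, whereas you verify the same containments directly.
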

\begin{proof}
For $v\in A(G_2)$, let 
$$L_v : \vn (G_1)\bar\otimes\vn (G_2)\to \vn (G_1)$$
be the Tomiyama left slice map associated with $v$,
defined by the identity
$$\langle L_v(T),u\rangle = \langle T, u\otimes v\rangle, \ \ \ u\in A(G_1)$$
(see {\it e.g.} \cite{kraus_tams}).
For $u\in A(G_1)$, let
$$R_u : \vn (G_1)\bar\otimes\vn (G_2)\to \vn (G_2)$$
be the analogously defined right slice map.

Set $\cl X_i = J_i^{\perp}$, $i = 1,2$.
Let $\cl X_1\bar\otimes\cl X_2$ be the weak* closed spacial tensor product,
and
$$\cl X_1\bar\otimes_{\cl F} \cl X_2 =
\{T\in \cl B(L^2(G_1\times G_2)) : R_u(T)\in \cl X_2, L_v(T)\in \cl X_1,$$
$$\mbox{ for all } u\in A(G_1), v\in A(G_2)\}$$
be the normal Fubini tensor product (see \cite{kraus_tams}), of $\cl X_1$ and $\cl X_2$.
We have
\begin{equation}\label{eq_x12}
\cl X_1\bar\otimes\cl X_2 \subseteq (\cl X_1\bar\otimes \vn(G_2))\cap (\vn(G_1)\bar\otimes\cl X_2) = J^{\perp}
\subseteq \cl X_1\bar\otimes_{\cl F} \cl X_2.
\end{equation}

(i)$\Rightarrow$(ii)
Suppose that $T_i\in \cl X_i\cap C_r^*(G_i)$ is non-zero, $i = 1,2$. Then,
after identifying the minimal tensor product $C_r^*(G_1)\otimes_{\min} C_r^*(G_2)$ with
$C_r^*(G_1\times G_2)$ in the canonical way, we have that
$T_1\otimes T_2$ is a non-zero operator in
$(\cl X_1\bar\otimes\cl X_2)\cap C_r^*(G_1\times G_2)$.
Inclusion (\ref{eq_x12}) shows that $J$ is an ideal of multiplicity.

(ii)$\Rightarrow$(i) It suffices, by symmetry, to show that $J_1$ is an ideal of multiplicity.
Suppose, by way of contradiction, that $J_1$ is an ideal of uniqueness.
Recall that every $v\in A(G_2)$ can be viewed as a weak* continuous functional on $\vn(G_2)$
and, since $A(G_2)$ is a subspace of the reduced Fourier-Stieltjes algebra $B_r(G_2)$ (see \cite{eymard}),
as a norm continuous functional on $C_r^*(G_2)$.
Since $L_v$ is norm continuous, it maps $C_r^*(G_1\times G_2)$ into $C_r^*(G_1)$.
Suppose that $T$ is a non-zero operator in $J^{\perp}\cap C_r^*(G_1\times G_2)$.
It follows from (\ref{eq_x12}) that $L_v(T)\in \cl X_1\cap C_r^*(G_1)$.
Since $J_1$ is an ideal of uniqueness, $L_v(T) = 0$.
Thus, for every $u\in A(G_1)$ and every $v\in A(G_2)$, we have
$$\langle T,u\otimes v\rangle = \langle L_v(T),u\rangle = 0$$
and since $A(G_1)\otimes A(G_2)$ is dense in $A(G_1\times G_2)$,
we conclude that $T = 0$, a contradiction.
\end{proof}

\begin{corollary}\label{c_ampl}
 A closed ideal
$J_1$ of $A(G_1)$ is an ideal of multiplicity if and only if the ideal
$\overline{J_1\otimes A(G_2)}^{\|\cdot\|_A}$ of $A(G_1\times G_2)$ is an ideal of multiplicity.
\end{corollary}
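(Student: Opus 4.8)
The plan is to derive Corollary \ref{c_ampl} directly from Theorem \ref{th_tp} by specialising the second ideal to the trivial choice. The key observation is that the whole algebra $A(G_2)$ is itself a closed ideal of $A(G_2)$, and taking $J_2 = A(G_2)$ causes the symmetric defining expression for $J$ to collapse onto a single summand. Indeed, with $J_2 = A(G_2)$ we have $A(G_1)\otimes J_2 = A(G_1)\otimes A(G_2) \subseteq J_1\otimes A(G_2) + A(G_1)\otimes A(G_2)$, so that
\[
J = \overline{J_1\otimes A(G_2) + A(G_1)\otimes A(G_2)}^{\|\cdot\|_A}
= \overline{A(G_1)\otimes A(G_2)}^{\|\cdot\|_A}
= A(G_1\times G_2),
\]
using the density of $A(G_1)\otimes A(G_2)$ in $A(G_1\times G_2)$. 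This is \emph{not} what we want, so the first thing I would check is that the intended specialisation is instead $J_2 = \{0\}$, the zero ideal.

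With $J_2 = \{0\}$, the term $A(G_1)\otimes J_2$ vanishes and the ideal of Theorem \ref{th_tp} becomes exactly $J = \overline{J_1\otimes A(G_2)}^{\|\cdot\|_A}$, which is precisely the ideal appearing in the statement of Corollary \ref{c_ampl}. The remaining ingredient is to verify that the zero ideal $\{0\}$ of $A(G_2)$ is always an ideal of multiplicity, so that condition (i) of Theorem \ref{th_tp} reduces to ``$J_1$ is an ideal of multiplicity'' alone. This holds because $\{0\}^{\perp} = \vn(G_2)$, and hence $\{0\}^{\perp}\cap C_r^*(G_2) = C_r^*(G_2)$, which is non-zero; thus by Definition \ref{d_mi} the zero ideal is an ideal of multiplicity. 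Consequently, in the equivalence (i)$\Leftrightarrow$(ii) of Theorem \ref{th_tp}, the conjunction ``$J_1$ and $J_2$ are ideals of multiplicity'' is equivalent to ``$J_1$ is an ideal of multiplicity,'' and (ii) reads precisely ``$\overline{J_1\otimes A(G_2)}^{\|\cdot\|_A}$ is an ideal of multiplicity,'' giving the corollary.

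The only genuine subtlety, and the step I would be most careful about, is confirming that Theorem \ref{th_tp} remains valid when one of the ideals is the zero ideal. Re-reading the proof of Theorem \ref{th_tp}, the argument uses slice maps, the inclusion (\ref{eq_x12}), and the identification $\cl X_i = J_i^{\perp}$; none of these steps requires $J_i$ to be nonzero or proper, so the degenerate case $J_2 = \{0\}$ (equivalently $\cl X_2 = \vn(G_2)$) is covered without modification. Thus I would present the corollary as an immediate consequence: apply Theorem \ref{th_tp} with $J_2 = \{0\}$, note that $\{0\}$ is trivially an ideal of multiplicity, and read off the resulting equivalence. The proof is therefore short, and the main (if modest) obstacle is simply identifying the correct specialisation and checking that the theorem's hypotheses accommodate the zero ideal.
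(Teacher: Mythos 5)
Your proposal is correct and coincides with the paper's own argument, which likewise deduces the corollary from Theorem \ref{th_tp} by taking $J_2 = \{0\}$. Your additional check that $\{0\}$ is an ideal of multiplicity (since $\{0\}^{\perp}\cap C_r^*(G_2) = C_r^*(G_2)\neq\{0\}$) is the detail the paper leaves implicit, and it is right.
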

\begin{proof}
Immediate from Theorem \ref{th_tp} after letting $J_2 = \{0\}$.
\end{proof}

\begin{theorem}\label{th_tp2}
Let
$J_1\subseteq A(G_1)$ and $J_2\subseteq A(G_2)$ be closed ideals.
Set
$$J = \overline{J_1\otimes A(G_2)}^{\|\cdot\|_A}\cap \overline{A(G_1)\otimes J_2}^{\|\cdot\|_A}.$$
The following are equivalent:

(i) \ $J_1$ and $J_2$ are ideals of uniqueness;

(ii) $J$ is an ideal of uniqueness.
\end{theorem}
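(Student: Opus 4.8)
The plan is to recognise $J$ as the intersection of the two single-factor amplifications and thereby reduce everything to Corollaries \ref{c_ampl} and \ref{c_int}, which already contain all the substance. Concretely, I would set
$K_1 = \overline{J_1\otimes A(G_2)}^{\|\cdot\|_A}$ and $K_2 = \overline{A(G_1)\otimes J_2}^{\|\cdot\|_A}$; these are closed ideals of $A(G_1\times G_2)$ (using that $A(G_1)\otimes A(G_2)$ is dense in $A(G_1\times G_2)$ and that multiplication is continuous), and by definition $J = K_1\cap K_2$. Corollary \ref{c_ampl} then says that $J_1$ is an ideal of uniqueness if and only if $K_1$ is, and the same corollary with the roles of $G_1$ and $G_2$ interchanged says that $J_2$ is an ideal of uniqueness if and only if $K_2$ is. This converts the theorem into the purely single-group assertion that $K_1$ and $K_2$ are ideals of uniqueness of $A(G_1\times G_2)$ if and only if $K_1\cap K_2$ is.

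For the implication (i)$\Rightarrow$(ii), I would assume that $J_1$ and $J_2$ are ideals of uniqueness; by the previous paragraph $K_1$ and $K_2$ are then ideals of uniqueness, and Corollary \ref{c_int}, applied in the second countable locally compact group $G_1\times G_2$, yields that $J = K_1\cap K_2$ is an ideal of uniqueness.

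For the reverse implication (ii)$\Rightarrow$(i), I would invoke the monotonicity recorded in the Remark following Definition \ref{d_mi}: since $J = K_1\cap K_2\subseteq K_1$ and $J$ is an ideal of uniqueness, $K_1$ is an ideal of uniqueness, and symmetrically the inclusion $J\subseteq K_2$ forces $K_2$ to be one as well. Corollary \ref{c_ampl} then returns that $J_1$ and $J_2$ are ideals of uniqueness, completing the equivalence.

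I do not expect a genuine obstacle here, precisely because the difficulty has been isolated earlier: the reverse direction is pure monotonicity, while the entire content of the forward direction is the preservation of uniqueness under intersections (Corollary \ref{c_int}), which itself rests on the intersection identity of Theorem \ref{th_sumsink}. The only points deserving a line of care are the bookkeeping that $K_1$ and $K_2$ really are closed ideals of $A(G_1\times G_2)$ and that Corollary \ref{c_ampl} applies symmetrically in its two arguments; beyond this, no new estimate or transference step is required.
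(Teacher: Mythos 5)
Your proof is correct and follows essentially the same route as the paper: the forward direction is exactly Corollary \ref{c_ampl} followed by Corollary \ref{c_int} applied in $G_1\times G_2$. For the reverse direction the paper writes out an explicit contradiction with a tensor $T_1\otimes T_2$ of operators in $(J_1^{\perp}\cap C_r^*(G_1))\otimes C_r^*(G_2)$, whereas you package the same content as monotonicity of uniqueness under inclusion plus Corollary \ref{c_ampl} (whose proof is that very tensor argument), so the two are equivalent.
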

\begin{proof}

(i)$\Rightarrow$(ii)
By Corollary \ref{c_ampl},
$\overline{J_1\otimes A(G_2)}^{\|\cdot\|_A}$ and
$\overline{A(G_1)\otimes J_2}^{\|\cdot\|_A}$ are ideals of uniqueness, and by Corollary \ref{c_int},
so is $J$.

(ii)$\Rightarrow$(i)
Set $\cl X_i = J_i^{\perp}$, $i = 1,2$. Note that
$$\overline{\vn(G_1)\otimes \cl X_2 + \cl X_1\otimes \vn(G_2)}^{w^*}\subseteq J^{\perp}.$$
If $T_1\in \cl X_1\cap C_r^*(G_1)$ and $T_2\in C_r^*(G_2)$ are non-zero then
$$T_1\otimes T_2 \in (\cl X_1\bar\otimes \vn(G_2))\cap C_r^*(G_1\times G_2)$$
is non-zero,
a contradiction with the assumption that $J$ is an ideal of uniqueness.
It follows that $J_1$ is an ideal of uniqueness, and by symmetry, so is $J_2$.
\end{proof}

The Remark after Definition \ref{d_mi}, Theorem \ref{th_tp2} and the fact that
\begin{equation}\label{eqq_j1j2}
\overline{J_1\otimes J_2}^{\|\cdot\|_A} \subseteq
\overline{J_1\otimes A(G_2)}^{\|\cdot\|_A}\cap \overline{A(G_1)\otimes J_2}^{\|\cdot\|_A}
\end{equation}
show that if
$\overline{J_1 \otimes J_2}^{\|\cdot\|_A}$ is an ideal of uniqueness then so are $J_1$ and $J_2$.
We do not know if the converse is true. In the next proposition, however, we give some
sufficient conditions which imply an equality in (\ref{eqq_j1j2}).

\begin{proposition} \label{bai}
Let  $J_1\subseteq A(G_1)$, $J_2\subseteq A(G_2)$ be closed ideals.
Assume that $G_1$ is an amenable locally compact group and $J_2$ has bounded approximate identity.
Then
$$\overline{J_1\otimes A(G_2)}^{\|\cdot\|_A} \cap \overline{A(G_1)\otimes J_2}^{\|\cdot\|_A}
= \overline{J_1\otimes J_2}^{\|\cdot\|_A}. $$
\end{proposition}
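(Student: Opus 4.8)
The inclusion $\supseteq$ is the content of (\ref{eqq_j1j2}), so the task is to establish that $K_1\cap K_2\subseteq \overline{J_1\otimes J_2}^{\|\cdot\|_A}$, where I write $K_1=\overline{J_1\otimes A(G_2)}^{\|\cdot\|_A}$ and $K_2=\overline{A(G_1)\otimes J_2}^{\|\cdot\|_A}$. The plan is to manufacture, out of the two hypotheses, a net of \emph{genuine} elements of the algebraic tensor product $A(G_1)\otimes J_2$ (rather than mere multipliers) that acts as an approximate identity on $K_2$, so that multiplying an element of $K_1\cap K_2$ by a member of this net simultaneously recovers it in the limit and visibly places it in $\overline{J_1\otimes J_2}^{\|\cdot\|_A}$.

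Since $G_1$ is amenable, Leptin's theorem provides a bounded approximate identity $(e_\alpha)_\alpha$ for $A(G_1)$, say with $\|e_\alpha\|_A\le C_1$; let $(f_\beta)_\beta\subseteq J_2$ be the bounded approximate identity of $J_2$, with $\|f_\beta\|_A\le C_2$. For each pair $(\alpha,\beta)$ the elementary tensor $e_\alpha\otimes f_\beta$ lies in $A(G_1)\otimes J_2\subseteq A(G_1\times G_2)$ and satisfies $\|e_\alpha\otimes f_\beta\|_A\le C_1 C_2$; moreover, under the pointwise multiplication of $A(G_1\times G_2)$ one has $(e_\alpha\otimes f_\beta)(u\otimes v)=(e_\alpha u)\otimes(f_\beta v)$. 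I would first record two properties of the net $(e_\alpha\otimes f_\beta)$, indexed by the product directed set. First, for $w\in K_1$ the product $(e_\alpha\otimes f_\beta)w$ belongs to $\overline{J_1\otimes J_2}^{\|\cdot\|_A}$: approximating $w$ by finite sums $\sum_i u_i\otimes v_i$ with $u_i\in J_1$, $v_i\in A(G_2)$ and using that $e_\alpha u_i\in J_1$ and $f_\beta v_i\in J_2$ (both $J_1,J_2$ are ideals and $f_\beta\in J_2$), one sees that $(e_\alpha\otimes f_\beta)(\sum_i u_i\otimes v_i)\in J_1\otimes J_2$, and this passes to the closure by boundedness of multiplication. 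Second, for $w\in K_2$ one has $(e_\alpha\otimes f_\beta)w\to w$ in $\|\cdot\|_A$: on a finite sum $\sum_i u_i\otimes g_i$ with $u_i\in A(G_1)$, $g_i\in J_2$, the convergences $e_\alpha u_i\to u_i$ and $f_\beta g_i\to g_i$ give $(e_\alpha\otimes f_\beta)(\sum_i u_i\otimes g_i)\to\sum_i u_i\otimes g_i$, and the uniform bound $C_1 C_2$ together with density upgrades this to all of $K_2$.

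Granting these two properties, the proof concludes quickly: if $w\in K_1\cap K_2$, then $(e_\alpha\otimes f_\beta)w\in\overline{J_1\otimes J_2}^{\|\cdot\|_A}$ for every $(\alpha,\beta)$ by the first property, while $(e_\alpha\otimes f_\beta)w\to w$ by the second; since $\overline{J_1\otimes J_2}^{\|\cdot\|_A}$ is norm closed, it follows that $w\in\overline{J_1\otimes J_2}^{\|\cdot\|_A}$, as required.

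The step I expect to require the most care is the second property, namely the convergence of the two-parameter net along the product directed set. An element of $K_2$ has an arbitrary $A(G_1)$-component in the first variable, so one genuinely needs $e_\alpha u_i\to u_i$ for all $u_i\in A(G_1)$ — this is precisely where the amenability of $G_1$ (equivalently, the existence of a bounded approximate identity for $A(G_1)$) enters, and it is what lets the whole argument proceed with honest elements $e_\alpha\otimes f_\beta$ of $A(G_1\times G_2)$ in place of the multiplier $1\otimes f_\beta$. Concretely, one splits $(e_\alpha u_i)\otimes(f_\beta g_i)-u_i\otimes g_i=(e_\alpha u_i-u_i)\otimes(f_\beta g_i)+u_i\otimes(f_\beta g_i-g_i)$ and estimates each summand using $\|f_\beta g_i\|_A\le C_2\|g_i\|_A$; combined with the uniform bound on $\|e_\alpha\otimes f_\beta\|_A$ and the density of $A(G_1)\otimes J_2$ in $K_2$, a routine three-$\varepsilon$ argument then yields $(e_\alpha\otimes f_\beta)w\to w$ for every $w\in K_2$.
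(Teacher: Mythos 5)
Your proposal is correct and follows essentially the same route as the paper: both use Leptin's theorem to get a bounded approximate identity $(e_\alpha)$ for $A(G_1)$, combine it with the bounded approximate identity $(f_\beta)$ of $J_2$ to form the multipliers $e_\alpha\otimes f_\beta$, observe that these send $\overline{J_1\otimes A(G_2)}^{\|\cdot\|_A}$ into $\overline{J_1\otimes J_2}^{\|\cdot\|_A}$, and recover $w$ in the limit via a three-$\varepsilon$ argument using the uniform bound $C_1C_2$. The only (immaterial) difference is that the paper extracts a diagonal subnet $\alpha(n)$ adapted to an approximating sequence $v_n$, whereas you prove convergence of the full product-indexed net on all of $\overline{A(G_1)\otimes J_2}^{\|\cdot\|_A}$ first.
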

\begin{proof}
Let $(a_\alpha)_{\alpha}$ be a bounded approximate identity for $A(G_1)$,
which exists due to the amenability of $G_1$, and let $(b_\alpha)_{\alpha}$ be a
bounded approximate identity for $J_2$.
Let $C > 0$ be such that $\|a_{\alpha}\|\leq C$ and  $\|b_{\alpha}\|\leq C$ for all $\alpha$.

Fix $u\in \overline{J_1\otimes A(G_2)}^{\|\cdot\|_A}\cap\overline{A(G_1)\otimes J_2}^{\|\cdot\|_A}$, and
let $u_n\in J_1\otimes A(G_2)$ and $v_n\in A(G_1)\otimes J_2$ be such that $u=\lim_n u_n=\lim_n v_n$.
For every $\alpha$, we have that
$u(a_\alpha\otimes b_\alpha)=\lim_n u_n(a_\alpha\otimes b_\alpha)$ and since
$u_n(a_\alpha\otimes b_\alpha)\in J_1\otimes J_2$, for every $n$ and $\alpha$,
it follows that $u(a_\alpha\otimes b_\alpha)\in\overline{J_1\otimes J_2}$ for every $\alpha$.

As $v_n=\lim_\alpha v_n(a_\alpha\otimes b_\alpha)$ for any $n$, given $\varepsilon>0$, $n\in \mathbb N$,
there exists $\alpha(n)>0$ such that $\|v_n-v_n(a_{\alpha(n)}\otimes b_{\alpha(n)})\|<\varepsilon$.
Let $N$ be such that $\|u-v_n\|<\varepsilon$ whenever $n\geq N$.
If $n\geq N$ then
\begin{eqnarray*}
\|u - u(a_{\alpha(n)}\otimes b_{\alpha(n)})\|&\leq &\|u-v_n\|+\|v_n-v_n(a_{\alpha(n)}\otimes b_{\alpha(n)})\|\\
&+&\|v_n(a_{\alpha(n)}\otimes b_{\alpha(n)})-u(a_{\alpha(n)}\otimes b_{\alpha(n)})\|\\
&\leq& \varepsilon+\varepsilon+\varepsilon\|a_{\alpha(n)}\|\|b_{\alpha(n)}\|\leq (C^2 + 2)\varepsilon.
\end{eqnarray*}
Hence $u \in \overline{J_1\otimes J_2}$.
\end{proof}

Let $\cl R_c(G)$ be the collection of all subsets $E$ of $G$ of the form
\begin{equation}\label{form}
E=\cup_{i=1}^n(a_iH_i\setminus\cup_{j=1}^{m_i}b_{i,j}K_{i,j}),
\end{equation}
where $a_{i}$, $b_{i,j}\in G$, $H_i$ is a closed subgroup of $G$ and $K_{i,j}$ is an open subgroup of $H_i$ ($n,m_i\in\mathbb N_0$, $1\leq i\leq n$, $1\leq j\leq m_i$).
The family $\cl R_c(G)$ is known as the closed coset ring of $G$ (see \cite{forrest}).
By \cite{forest_kanuith_spronk},
if $G$ is an amenable locally compact group and $I$ is a closed ideal of $A(G)$, then $I$ has a bounded approximate identity
if and only if $I=I(E)$ for a subset $E\in\cl R_c(G)$. Note that, by \cite{forest_kanuith_spronk},
any element $E$ of $\cl R_c(G)$ is a set of spectral synthesis; hence $I(E)$ is an ideal of uniqueness
if and only if $E$ is a set of uniqueness.

\begin{corollary}\label{c_chmh}
Let $G_1$, $G_2$ be amenable second countable locally compact groups, $J_1\subseteq A(G_1)$
be a closed ideal  and $E$ be a subset of the form (\ref{form}).
The following are equivalent:

(i) \ $\overline{J_1\otimes I(E)}^{\|\cdot\|_A}$ is an ideal of uniqueness;

(ii) $J_1$ is an ideal of uniqueness and  $m_{G_2}(H_i)=0$, $1\leq i\leq n$.
\end{corollary}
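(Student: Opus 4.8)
The plan is to combine the tensor-product and intersection results already established with a measure-theoretic analysis of the set $E$.

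First I would reduce the whole corollary to a statement about the single ideal $I(E)$. Since $G_2$ is amenable and $E\in\cl R_c(G_2)$, the result quoted before the statement guarantees that $I(E)$ has a bounded approximate identity. As $G_1$ is amenable, Proposition \ref{bai} applies with $J_2=I(E)$ and gives
$$\overline{J_1\otimes A(G_2)}^{\|\cdot\|_A}\cap\overline{A(G_1)\otimes I(E)}^{\|\cdot\|_A}=\overline{J_1\otimes I(E)}^{\|\cdot\|_A}.$$
Theorem \ref{th_tp2}, applied with $J_2=I(E)$, identifies the left-hand side as an ideal of uniqueness precisely when both $J_1$ and $I(E)$ are ideals of uniqueness. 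Finally, since $E$ is a set of spectral synthesis, $I(E)$ is an ideal of uniqueness if and only if $E$ is a set of uniqueness. Hence (i) is equivalent to the conjunction ``$J_1$ is an ideal of uniqueness and $E$ is a set of uniqueness'', and everything reduces to proving $E\text{ is a set of uniqueness}\iff m_{G_2}(H_i)=0$ for $1\le i\le n$.

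The second step is to show that, for $E$ in the coset ring, being a set of uniqueness is governed by Haar measure: $E$ is a set of uniqueness if and only if $m_{G_2}(E)=0$. The ``positive measure'' direction is elementary. Each piece $a_iH_i\setminus\cup_j b_{i,j}K_{i,j}$ is relatively closed in the closed coset $a_iH_i$, so $E$ is closed; if $m_{G_2}(E)>0$, inner regularity supplies a compact $L\subseteq E$ with $m_{G_2}(L)>0$, and $\lambda(\chi_L)$ is then a non-zero element of $C_r^*(G_2)$ with $\langle\lambda(\chi_L),u\rangle=\int_L u=0$ for every $u\in I(E)$. Thus $I(E)^{\perp}\cap C_r^*(G_2)\neq\{0\}$ and $E$ is not a set of uniqueness. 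For the converse I would pass to the operator side: $m_{G_2}(E)=0$ forces $E^*=\{(s,t):ts^{-1}\in E\}$ to have product measure zero in $G_2\times G_2$, whence $\frak{M}_{\min}(E^*)$ contains no non-zero compact operator, since a non-zero compact operator has support of positive product measure (\cite{eks}, \cite{st1}). By Corollary \ref{c_E}(ii), $E$ is then not an $M_1$-set, i.e.\ $E$ is a set of uniqueness.

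Finally I would convert the measure condition on $E$ into the stated condition on the subgroups. Discarding any empty pieces (which do not contribute to $E$), set $K_i=\cap_{j}K_{i,j}$, an open, hence closed, subgroup of $H_i$. Since each $K_{i,j}\supseteq K_i$, the set $\cup_j b_{i,j}K_{i,j}$ and the coset $a_iH_i$ are unions of $K_i$-cosets, so a non-empty piece $a_iH_i\setminus\cup_j b_{i,j}K_{i,j}$ is a non-empty union of $K_i$-cosets; its measure is therefore zero exactly when $m_{G_2}(K_i)=0$, which holds exactly when $K_i$ (equivalently $H_i$) is non-open, i.e.\ $m_{G_2}(H_i)=0$. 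Consequently, when $m_{G_2}(E)=0$ every surviving $H_i$ is non-open, and when some $H_i$ is open the corresponding non-empty piece has positive measure; this yields $m_{G_2}(E)=0\iff m_{G_2}(H_i)=0$ for all $i$, completing the chain of equivalences.

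The main obstacle is the converse in the second step: establishing that a Haar-null set in the coset ring supports no non-zero element of $C_r^*(G_2)$. I expect the substantive content to lie precisely in the operator-theoretic fact that a non-zero compact operator cannot have support of product measure zero (equivalently, that a product-null $\omega$-closed set is an operator $U_1$-set for the minimal bimodule); everything else is bookkeeping with the results of Sections \ref{s_cr} and \ref{s_pres}.
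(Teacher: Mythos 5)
Your reduction of the corollary to the statement ``$E$ is a set of uniqueness if and only if $m_{G_2}(H_i)=0$ for all $i$'' (via Proposition \ref{bai}, Theorem \ref{th_tp2} and the spectral synthesis property of coset-ring sets) is exactly the paper's first step, and your argument that a closed set of positive Haar measure is an $M_1$-set (via $\lambda(\chi_L)\in I(E)^{\perp}\cap C_r^*(G_2)$) is also fine. The gap is in the converse half of your second step. The ``operator-theoretic fact'' you rely on --- that a non-zero compact operator cannot have support of product measure zero, equivalently that $\frak{M}_{\min}(\kappa)\cap\cl K=\{0\}$ whenever $\kappa$ is product-null --- is false. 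Indeed, for $G_2=\bb{T}$, Menshov's classical construction (see \cite{gmcgehee}) produces a closed set $E$ of Haar measure zero carrying a non-zero measure whose Fourier coefficients vanish at infinity; such a measure is a non-zero element of $I(E)^{\perp}\cap C_r^*(\bb{T})$, so $E$ is an $M_1$-set, and by Corollary \ref{c_E}(ii) the product-null set $E^*$ is an operator $M_1$-set, i.e.\ $\frak{M}_{\min}(E^*)$ contains a non-zero compact operator. So the equivalence ``set of uniqueness $\Leftrightarrow$ Haar-null'' fails for general closed sets, and it cannot be established for coset-ring sets by a purely measure-theoretic argument; the coset structure must enter essentially.

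The paper closes this step differently: when every $H_i$ is non-open it invokes \cite[Corollary 5.10]{stt}, which asserts that a closed subgroup of Haar measure zero is a set of uniqueness (a genuinely structural result, not a consequence of nullity alone), and then uses that closed subsets of translates of sets of uniqueness are again sets of uniqueness, together with stability of uniqueness under finite unions \cite[Proposition 5.3]{stt_IEOT}. Your final combinatorial bookkeeping (non-empty pieces are unions of cosets of the open subgroup $K_i=\cap_j K_{i,j}$ of $H_i$, so a non-empty piece is Haar-null exactly when $H_i$ is non-open) is a correct and slightly different way to organise the multiplicity direction, but it does not supply the missing subgroup-uniqueness input needed for the uniqueness direction.
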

\begin{proof}
By Theorem \ref{th_tp2}, Proposition \ref{bai} and the result stated before
the formulation of Corollary \ref{c_chmh},
it suffices to show that $I(E)$ is an ideal of uniqueness if and only if
$m_{G_2}(H_i) = 0$ for all $i=1,\ldots, n$.
Assume that $m_{G_2}(H_i)>0$ for some $i$, or, equivalently, that $H_i$ is open.
If $m_i=0$, then since the set
$a_iH_i$ is open, $m_{G_2}(a_iH_i)>0$ and hence $a_iH_i$ and $E$  are not  sets of uniqueness
(see \cite[Remark 4.3]{stt}).
 Let $m_i>0$. We have
 $$a_iH_i\setminus(\cup_{j=1}^{m_i} b_{i,j}K_{i,j})=a_i(H_i\setminus(\cup_{j=1}^{m_i} a_i^{-1}b_{i,j}K_{i,j})).$$
 As $K_{i,j}$ is open in $H_i$, the set $H_i\setminus (\cup_{j=1}^{m_i} a_i^{-1}b_{i,j}K_{i,j})$ is open and closed in $H_i$. Hence
 $m_{H_i}(H_i\setminus(\cup_{j=1}^{m_i} a_i^{-1}b_{i,j}K_{i,j}))>0$. As $m_{G_2}(H_i)>0$, by uniqueness of the Haar measure up to a constant, we have $m_{G_2}(\alpha)=Cm_{H_i}(\alpha)$ for some constant $C>0$ and any measurable $\alpha\subseteq H_i$. Hence $m_{G_2}(a_iH_i\setminus(\cup_{j=1}^{m_i} b_{i,j}K_{i,j}))>0$, giving that $a_iH_i\setminus(\cup_{j=1}^{m_i} b_{i,j}K_{i,j})$ and $E$ are not sets of uniqueness.

 Assume now that $m_{G_2}(H_i)=0$ for any $i=1,\ldots,n$. By \cite[Corollary 5.10 ]{stt}, $H_i$ is a set of uniqueness.
 Hence any subset of $a_iH_i$ is a set of uniqueness.
 Therefore $E$ is a set of uniqueness as a finite union of sets of uniqueness (\cite[Proposition 5.3]{stt_IEOT}).
\end{proof}

\begin{remark}\rm
It has been noted in \cite{ulger} that, for a connected amenable group $G$, any set $E$ in $\cl R_c(G)$ is a set of uniqueness.
This also follows from Corollary \ref{c_chmh}, as in this case  $G$ does not contain any proper open subgroup.
\end{remark}


\subsection{Inverse images}

In this subsection we establish an inverse image result for ideals of uniqueness.

\begin{theorem}\label{th_invima}
Let $G$ and $H$ be locally compact second countable groups with Haar measures $m_G$ and $m_H$, respectively.
Let $\nph:G\to H$  be a continuous homomorphism.
Assume that the measure $\nph_*m_G$ is absolutely continuous with respect to $m_H$.
Let $J$ be a closed ideal of $A(H)$ and
$\nph_*(J)$ be the closed ideal of $A(G)$ generated by the set $\{f\circ \nph : f\in J\}$.
The following hold:

(i) If $\nph$ is injective and has a continuous inverse on $\nph(G)$, then $\nph_*(J)$ is an ideal of uniqueness whenever  $J$ is so.

(ii) If $\nph_*m_G$ is equivalent to $m_H$ then
$\nph_*(J)$ is an ideal of multiplicity whenever $J$ is so.
\end{theorem}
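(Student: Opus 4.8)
The plan is to realise $\nph$ functorially as a pair of maps between the group algebras of $G$ and $H$ and to transfer the defining condition $J^{\perp}\cap C_r^*(\cdot)\neq\{0\}$ along them. Since $\nph_*m_G\ll m_H$, pushforward of densities gives a contractive map $\nph_\sharp:L^1(G)\to L^1(H)$, determined by $\nph_\sharp(g)\,m_H=\nph_*(g\,m_G)$; because $\nph$ is a homomorphism, $\nph_\sharp$ respects convolution, and unwinding the definition yields $\lambda^H(\nph_\sharp(g))=\int_G g(s)\lambda^H(\nph(s))\,ds$. On the predual side I consider the pullback $\Phi:f\mapsto f\circ\nph$. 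A computation with the pairing $\langle u,\lambda^G(g)\rangle=\int_G g\,u\,dm_G$ shows that, whenever $\Phi$ is a well-defined bounded map $A(H)\to A(G)$, its adjoint $\Phi^*:\vn(G)\to\vn(H)$ satisfies $\Phi^*(\lambda^G(g))=\lambda^H(\nph_\sharp(g))$; hence $\Phi^*(C_r^*(G))\subseteq C_r^*(H)$, and since the generators $f\circ\nph$ ($f\in J$) of $\nph_*(J)$ are exactly $\Phi(J)$, one gets formally $\Phi^*(\nph_*(J)^{\perp})\subseteq J^{\perp}$. Consequently $\Phi^*$ maps $\nph_*(J)^{\perp}\cap C_r^*(G)$ into $J^{\perp}\cap C_r^*(H)$, which already contains the content of part (i) provided $\Phi^*$ is injective.

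The boundedness of $\Phi$ (equivalently, the weak containment $\lambda^H\circ\nph\prec\lambda^G$) is the crux, and this is where the measure hypotheses enter through a structural reduction. In case (i), injectivity of $\nph$ together with continuity of $\nph^{-1}$ on $\nph(G)$ makes $\nph$ a homeomorphism onto the locally compact, hence closed, subgroup $H_0=\nph(G)$; the condition $\nph_*m_G\ll m_H$ then forces $m_H(H_0)>0$, so $H_0$ is open and $G\cong H_0$. For an open subgroup $\Phi$ is simply the contractive restriction map $A(H)\to A(H_0)=A(G)$, with $\nph_\sharp(g)=g\,\chi_{H_0}$, and $\Phi^*$ is the canonical inclusion $C_r^*(H_0)\hookrightarrow C_r^*(H)$, an isometric $*$-homomorphism. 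In case (ii), $\nph_*m_G\sim m_H$ forces $\nph$ to have dense, conull image and compact kernel $N$; an open-mapping argument (valid since $G$ is second countable) makes $\nph$ an open surjection, so $H\cong G/N$ and $\Phi$ is the isometric inflation $A(G/N)\hookrightarrow A(G)$.

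For part (i) I argue directly: by the reduction $\Phi^*$ is injective on $C_r^*(G)$, and $\Phi^*\big(\nph_*(J)^{\perp}\cap C_r^*(G)\big)\subseteq J^{\perp}\cap C_r^*(H)$. If $J$ is an ideal of uniqueness then $J^{\perp}\cap C_r^*(H)=\{0\}$, whence $\nph_*(J)^{\perp}\cap C_r^*(G)=\{0\}$ by injectivity; that is, $\nph_*(J)$ is an ideal of uniqueness.

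For part (ii) I need a map in the opposite direction, namely an injective weak$^*$-continuous section $\sigma:\vn(H)\to\vn(G)$ carrying $J^{\perp}\cap C_r^*(H)$ into $\nph_*(J)^{\perp}\cap C_r^*(G)$. Using the compact kernel $N$, I set $\sigma(\lambda^H(f))=\tfrac{1}{m_G(N)}\lambda^G(f\circ\nph)$; this is well defined because $f\circ\nph\in L^1(G)$ for $f\in L^1(H)$ when $N$ is compact, it maps $C_r^*(H)$ into $C_r^*(G)$ by norm-continuity, and the identity $\nph_\sharp(f\circ\nph)=m_G(N)f$ gives $\Phi^*\sigma=\mathrm{id}$, so $\sigma$ is injective. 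Its pre-adjoint is the averaging projection $\sigma_*=\tfrac{1}{m_G(N)}P_N:A(G)\to A(G/N)=A(H)$, where $P_Nu(s)=\int_N u(sn)\,dn$; since $P_N\big((f\circ\nph)u\big)=(f\circ\nph)\,P_Nu$ corresponds on $H$ to $f\cdot\overline{P_Nu}\in J$ for $f\in J$, one gets $\sigma_*(\nph_*(J))\subseteq J$ and therefore $\sigma(J^{\perp})\subseteq\nph_*(J)^{\perp}$. If $J$ is an ideal of multiplicity, I pick a nonzero $S\in J^{\perp}\cap C_r^*(H)$; then $\sigma(S)$ is a nonzero element of $\nph_*(J)^{\perp}\cap C_r^*(G)$, so $\nph_*(J)$ is an ideal of multiplicity. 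The main obstacle throughout is the functoriality step of the first two paragraphs — establishing that $\nph$ induces bounded maps with the stated injectivity and section properties — which is precisely what the measure-theoretic hypotheses secure via the reduction to an open subgroup in (i) or a compact-kernel quotient in (ii).
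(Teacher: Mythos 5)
Your argument reaches the right conclusions but by a genuinely different route from the paper. The paper never identifies the structure of $\nph$: it works directly with the Radon--Nikodym derivative $r$ of $\nph_*m_G$, builds the partial isometry $V_\nph:L^2(H)\to L^2(G)$ and the induced map $\Theta=V_\nph\otimes V_\nph$ on projective tensor products, and runs everything through the transference machinery of Corollary \ref{c_trans} and Theorem \ref{th_akt} at the level of $\Sat(J)\subseteq T(G)$ and its annihilator. You instead observe that the hypotheses force a structural dichotomy --- in (i) that $\nph(G)$ is a locally compact, hence closed, subgroup of positive measure, hence open, so that $\Phi^*$ is the canonical isometric embedding $C_r^*(G)\hookrightarrow C_r^*(H)$ with injectivity coming from surjectivity of the restriction map $A(H)\to A(\nph(G))$; in (ii) that $\nph$ is an open surjection with compact kernel $N$, so that the normalized map $\lambda^H(f)\mapsto \lambda^G(f\circ\nph)$ is an isometric weak*-continuous section whose preadjoint is the averaging projection $P_N$. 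Both directions of transfer ($\Phi^*(\nph_*(J)^\perp)\subseteq J^\perp$ in (i), $\sigma(J^\perp)\subseteq\nph_*(J)^\perp$ in (ii)) check out. What your approach buys is a proof that bypasses the $T(G)$/Schur-multiplier formalism entirely and makes visible that the hypotheses of the theorem are only ever realized by open subgroups and compact-kernel quotients; what the paper's approach buys is uniformity (one formula for $V_\nph$ covers both cases) and consistency with the $\Sat$/$\Bim$ framework used throughout.

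Two caveats. First, your assertion in (ii) that equivalence of $\nph_*m_G$ and $m_H$ \emph{forces} the kernel to be compact is not justified as stated and is false if ``equivalent'' means only ``same null sets'': the projection $\bb{R}^2\to\bb{R}$ has $\nph_*m_{\bb{R}^2}$ and $m_{\bb{R}}$ mutually absolutely continuous in that sense, yet noncompact kernel. Compactness of $N$ follows (via Weil's formula) only once one reads the hypothesis as including $\sigma$-finiteness of $\nph_*m_G$, i.e.\ the existence of a genuine Radon--Nikodym derivative --- a reading the paper also relies on implicitly, but which you should make explicit and actually derive rather than assert. Second, a notational slip: the normalizing constant in $\sigma$ must be the total mass of $N$ for the Haar measure $m_N$ appearing in Weil's formula, not $m_G(N)$, which vanishes whenever $N$ is compact but not open.
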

\begin{proof}
Let $r : H\rightarrow \bb{R}^+$ be the Radon-Nikodym derivative of $\nph_*m_G$
with respect to $m_H$; thus,
$$m_G(\nph^{-1}(\alpha)) = \int_{\alpha} r(x)dm_H(x)$$
for every measurable subset $\alpha\subseteq H$.
Letting $M = \{x\in H : r(x) = 0\}$,
note that
$$m_G(\nph^{-1}(M)) = \int_{M} r(x)dm_H(x) = 0.$$

Let  $V_{\nph} : L^2(H)\rightarrow L^2(G)$ be given by
$$V_{\nph}\xi (x) =
\begin{cases}
\frac{\xi(\nph(x))}{\sqrt{r(\nph(x))}} \;&\text{if }x\not\in \nph^{-1}(M),\\
0 &\text{if }x\in \nph^{-1}(M).
\end{cases}$$
It was shown in \cite[Lemma 5.4]{stt} that $V_\varphi$  is a partial isometry with initial space $L^2(M^c,m_H|_{M^c})$.
Moreover, if $\nph$ is injective then $V_{\nph}$ is surjective.

(i) We have
$$\Sat(\nph_*(J)) = \overline{[N(\nph_*(J))T(G)]}^{\|\cdot\|_T};$$
thus,
$\Sat(\nph_*(J))$ is the closed linear span of the functions of the form
$$(s,t)\to u(\nph(t)\nph(s)^{-1})h(s,t),$$
where $u \in J$ and $h\in T(G)$.
Let $\Theta$ be the linear map on the algebraic tensor product
$L^2(H)\otimes L^2(H)$ given by
$\Theta (f\otimes g)= V_\nph f\otimes V_\nph g$. As $V_\nph$ is a partial isometry, $\Theta$ can be extended to a contractive map $\Theta:T(H)\to T(G)$.
It was shown in the proof of \cite[Theorem 5.5]{stt} that if $h\in T(G)$ then
\begin{equation}\label{eq_eft}
\Theta(h)(x,y) = \frac{h(\nph(x),\nph(y))}{\sqrt{r(\nph(x)) r(\nph(y))}}, \
\mbox{ for m.a.e. } (x,y)\in \nph^{-1}(M)\times \nph^{-1}(M),
\end{equation}
and that the map $\Theta$ is the adjoint of the map on
$\cl K(L^2(G))$ sending an element $T\in \cl K(L^2(G))$ to the operator $V_\nph^*KV_\nph$.
It follows that $\Theta$ is weak* continuous and hence, if $\cl M\subseteq T(G)$ then
$\Theta(\overline{\cl M}^{w^*})\subseteq\overline{\Theta(\cl M)}^{w^*}$.

Suppose that $u\in J$, $\psi\in T(G)$ and $h = N(u)\psi$. Then
$$\Theta(h)(s,t) =
\frac{N(f\circ \nph)(s,t)\psi(\nph(s),\nph(t))}{\sqrt{r(\nph(s)) r(\nph(t))}}, \ \ \ s,t\in G,$$
and so $\Theta(h)\in \Sat(\nph_*(J))$.

Suppose that $J$ is an ideal of uniqueness.
By Corollary \ref{c_trans} and Theorem \ref{th_akt}, $\overline{\Sat(J)}^{w^*}=T(H)$.
As $\nph$ is injective, $V_\nph$ is surjective and hence the image of $\Theta$ is dense in $T(G)$.
We now have
$$T(G)=\overline{\Theta(T(H))}^{\|\cdot\|_T}
=\overline{\Theta(\overline{\Sat(J)}^{w^*})}^{\|\cdot\|_T}
\subseteq \overline{\Theta(\overline{\Sat(J)}^{w^*})}^{w^*}\subseteq\overline{\Sat(\nph_*(J))}^{w^*};$$
thus, $\overline{\Sat(\nph_*(J))}^{w^*} = T(H)$, and by Corollary \ref{c_trans} and Theorem \ref{th_akt},
$\nph_*(J)$ is an ideal of uniqueness.

(ii)
Suppose that $J$ is an ideal of multiplicity and let
$T_1$ be a non-zero compact operator in
$\Sat(J)^\perp$. Let $T = V_\nph T_1V_\nph^*$. As $\nph_*m_G$ is equivalent to $m_H$,
the set $M$ is $m_H$-null, and hence
$V_\nph^*V_\nph=I$. It follows that $T_1 = V_\nph^*TV_\nph$ and hence $T$ is a non-zero compact operator.

We claim that, given $h\in T(G)$ and $\phi\in \frak{S}(G)$,
\begin{equation}\label{eq_phipsi}
(V_\nph^*\otimes V_\nph^*)((\phi\circ(\nph\otimes\nph))h) = \phi (V_\nph^*\otimes V_\nph^*)(h).
\end{equation}
To see (\ref{eq_phipsi}), let first $a\in L^\infty(H)$ and $\xi\in L^2(G)$.
Then, for every $\eta\in L^2(H)$, we have
\begin{eqnarray*}
\langle V_\nph^*(a\circ\nph)\xi, \eta\rangle&=&\langle (a\circ\nph)\xi, V_\nph \eta\rangle
= \int a(\nph(x))\frac{\overline{\eta(\nph(x))}}{\sqrt{r(\nph(x))}}\xi(x)dm_G(x)\\
&=&\langle \xi, V_\nph\bar a \eta\rangle=
\langle M_a V_\nph^*\xi,\eta\rangle.
\end{eqnarray*}
Thus, $V_\nph^*(a\circ\nph)\xi =M_a V_\nph^*\xi$, and it follows that
(\ref{eq_phipsi}) holds whenever $\phi$ and $h$ are elementary tensors.
By linearity, it holds whenever $\phi\in L^{\infty}(G)\otimes L^{\infty}(G)$ and
$h \in L^2(G)\otimes L^2(G)$.

Now suppose that $h = \sum_{i=1}^{\infty} f_i\otimes g_i$, where
$(f_i)_{i\in \bb{N}}$ and $(g_i)_{i\in \bb{N}}$ are square summable sequences in $L^2(G)$.
Setting $h_k = \sum_{i=1}^{k} f_i\otimes g_i$, $k\in \bb{N}$, we have that
$\|h - h_k\|_T\to 0$ and the continuity of Schur multiplication and that of the operator $V_\nph^*\otimes V_\nph^*$
imply that (\ref{eq_phipsi}) holds for every $h\in T(G)$ and every $\phi\in L^{\infty}(G)\otimes L^{\infty}(G)$.
Finally, if $\phi$ is arbitrary, it is the limit in the Schur multiplier norm of a sequence $(\phi_k)_{k\in \bb{N}}$
in $L^{\infty}(G)\otimes L^{\infty}(G)$.
Since $\phi_k\circ (\nph\otimes\nph)\to \phi\circ (\nph\otimes\nph)$ in the Schur multiplier norm,
(\ref{eq_phipsi}) is established in full generality.

Let now $u\in J$, $\psi\in T(G)$. Then
\begin{eqnarray*}
\langle T, (N(u)\circ(\nph\otimes\nph))\psi\rangle &=&\langle T_1, (V_\nph^*\otimes V_\nph^*)(N(u)\circ (\nph\otimes\nph))\psi)\rangle\\
&=&\langle T_1, N(u)(V_\nph^*\otimes V_\nph^*)(\psi)\rangle=0.
\end{eqnarray*}
Hence $T\in \Sat(\nph_*(J))^\perp$, and by Corollary \ref{c_trans} and Theorem \ref{th_akt}, $\nph_*(J)$ is an  ideal of multiplicity.
\end{proof}

\end{document}